\newtheorem{theorem}{Theorem}[section]
\newtheorem{claim}[theorem]{Claim}
\newtheorem{corollary}[theorem]{Corollary}
\newtheorem{proposition}[theorem]{Proposition}
\theoremstyle{remark}
\newtheorem*{remark}{Remark}
\theoremstyle{definition}
\newtheorem{definition}[theorem]{Definition}
\title{Mind the Independence Gap}
\author[1]{T\i naz Ekim\thanks{tinaz.ekim@boun.edu.tr}}
\author[4]{Didem Gözüpek\thanks{didem.gozupek@gtu.edu.tr}}
\author[2,3]{Ademir Hujdurovi\'c\thanks{ademir.hujdurovic@upr.si}}
\author[2,3]{Martin Milani\v{c}\thanks{martin.milanic@upr.si}}
\affil[1]{\normalsize Department of Industrial Engineering, Bo\u{g}azi\c{c}i University, Istanbul, Turkey}
\affil[2]{\normalsize University of Primorska, UP IAM, Muzejski trg 2, SI-6000 Koper, Slovenia}
\affil[3]{\normalsize University of Primorska, UP FAMNIT, Glagolja\v ska 8, SI-6000 Koper, Slovenia}
\affil[4]{\normalsize Department of Computer Engineering, Gebze Technical University, Kocaeli, Turkey}
\date{\today}
\begin{document}

\maketitle
\begin{abstract}
The independence gap of a graph was introduced by Ekim et al.~(2018) 
 as a measure of how far a graph is from being well-covered. It is defined as the difference between the maximum and minimum size of a maximal independent set.

We investigate the independence gap of a graph from structural and algorithmic points of view, with a focus on classes of perfect graphs. Generalizing results on well-covered graphs due to Dean and Zito (1994) and Hujdurovi\'c et al.~(2018), we express the independence gap of a perfect graph in terms of clique partitions and use this characterization to develop a polynomial-time algorithm for recognizing graphs of constant independence gap in any class of perfect graphs of bounded clique number. Next, we introduce a hereditary variant of the parameter, which we call \emph{hereditary independence gap} and which measures the maximum independence gap over all induced subgraphs of the graph. We show that determining whether a given graph has hereditary independence gap at most $k$ is polynomial-time solvable if $k$ is fixed and co-NP-complete if $k$ is part of input. We also investigate the complexity of the independent set problem in graph classes related to independence gap, showing that the problem is NP-complete in the class of graphs of independence gap at most one and polynomial-time solvable in any class of graphs with bounded hereditary independence gap. Combined with some known results on claw-free graphs, our results imply that the independent domination problem is solvable in polynomial time in the class of $\{$claw, $2P_3\}$-free graphs.
\end{abstract}

\section{Introduction}

An \emph{independent set} in a graph is a set of pairwise non-adjacent vertices. An independent set in a graph is said to be \emph{maximal} if it is not contained in any larger independent set. For a graph $G$ we denote with $\alpha(G)$ the maximum size of an independent set in $G$, called the \emph{independence number} of $G$, and with $i(G)$ its \emph{independent domination number}, that is, the minimum size of a maximal independent set in $G$. Computing the independence number in various graph classes has been a central problem for decades. Similarly, many research papers focused on the recognition of well-coveredness in graph classes;
a graph is said to be \emph{well-covered} if all its maximal independent sets have the same size~\cite{MR1254158,MR1677797}.
A measure of how far a graph $G$ is from being well-covered was introduced
by Ekim et al.~in~\cite{DBLP:journals/corr/abs-1708-04632} under the name \emph{independence gap} of $G$, defined as the difference $\alpha(G)-i(G)$ and denoted by $\mu_\alpha(G)$. Note that $\mu_\alpha(G)= 0$ if and only if $G$ is well-covered. Along this line, a graph is called \emph{almost well-covered} if $\mu_\alpha(G)= 1$.

In this paper, we investigate the independence gap of a graph following three main approaches. Our results and motivations can be summarized as follows.

\medskip
{\bf \noindent 1.~Independence gap of perfect graphs.} We study the problem of computing the independence gap under various conditions, obtaining both positive and negative results. As shown by~Gr{\"o}tschel et al.~\cite{grotschel1981ellipsoid}, the independent set problem can be solved in polynomial time in the class of perfect graphs. The problem of computing the independence gap is more difficult, since it relates also to the independent domination problem and the problem of recognizing well-covered graphs, both of which are known to be intractable for perfect graphs. Indeed, these connections along with known results in the literature imply that computing the independence gap is NP-hard for bipartite graphs and for weakly chordal graphs (in the case of weakly chordal graphs even for any constant value of the parameter). As a generalization of perfect graphs, a graph is said to be \emph{semi-perfect} if its vertex set can be covered with $\alpha(G)$ cliques. We express the independence gap of semi-perfect graphs as the smallest value of $k$ such that the vertex set of the graph can be covered with a family of pairwise disjoint cliques such that each maximal independent set intersects all but at most $k$ cliques in the partition (Theorem~\ref{thm:semiperfect} and Corollary~\ref{cor:semiperfect-value-of-the-gap}). This result generalizes a characterization of well-covered semi-perfect graphs due to Hujdurovi\'c et al.~\cite{localizable} (which corresponds to the case $k = 0$). The result leads to a polynomial-time recognition algorithm of bipartite graphs of constant independence gap and, more generally, of graphs of constant independence gap in any class of perfect graphs of bounded clique number (Corollary~\ref{cor:perfect}). This algorithmic result generalizes the fact that well-covered graphs can be recognized in polynomial time in any class of perfect graphs of bounded clique number, as shown by Dean and Zito~\cite{MR1264476}.

\medskip
{\bf \noindent 2.~The independent set problem in graphs of small independence gap.} Clearly, in any class of graphs of constant independence gap, a constant additive approximation to a maximum independent set can be obtained simply by computing and returning any maximal independent set. This makes interesting the question about the complexity of the independent set problem in graphs of independence gap at most $k$, where $k$ is a positive integer. We show that in the class of graphs of independence gap at most one, the independent set problem is NP-complete and recognizing if a graph is well-covered is co-NP-complete (Theorem~\ref{thm:hardness}).
In particular, this means that even if we know that all maximal independent sets of a graph have size either $\alpha(G)$ or $\alpha(G)-1$, it is still hard to compute the exact value of $\alpha(G)$.

\medskip
{\bf \noindent 3.~A hereditary variant of independence gap.} It is not difficult to see that deleting a vertex from a graph may change the value of its independence gap in either direction and that classes of graphs of bounded independence gap are not hereditary.\footnote{A class of graphs is \emph{hereditary} if it is closed under vertex deletions.} We introduce a ``hereditary'' version of the parameter, called \emph{hereditary independence gap}, which measures the maximum independence gap over all induced subgraphs of the graph. We show that for every constant $k$, the class of graphs of hereditary independence gap at most $k$ is characterized by a finite set of forbidden induced subgraphs (Theorem \ref{thm:hereditary-k}). This result has several consequences, which indicate a variety of aspects in which the hereditary version of the parameter differs from the usual one.
First, for every $k$, graphs of hereditary independence gap at most $k$ can be recognized in polynomial time (Corollary \ref{cor:hereditarily-k-quasi-well-covered}). Second, combined with a result of Lozin and Rautenbach~\cite{WIS}, our approach shows that for every fixed $k$, the weighted independent set problem is solvable in polynomial time in the class of hereditary independence gap at most $k$ (Corollary~\ref{cor:WIS}). Third, in the special case of $k = 1$, our characterization (Theorem \ref{thm:hereditary}) combined with known results on claw-free graphs leads to a new polynomially solvable case of the independent domination problem -- the class of $\{$claw, $2P_3$$\}$-free graphs (Corollary \ref{cor:inddom}).
Finally, we complement the result about polynomial-time recognition of graphs of constant hereditary independence gap by showing that the computation of the hereditary independence gap of a given graph is NP-hard (Theorem~\ref{thm-np-hardness}).

\medskip
\noindent{\bf Overview of related work.}
Graphs of zero independence gap are the well-covered graphs, which are well studied in the literature, see, e.g., the survey papers~\cite{MR1254158,MR1677797}. In particular, the problem of determining whether a given graph is well-covered is known to be co-NP-complete~\cite{MR1161178,MR1217991}. The class of almost well-covered graphs, that is,  graphs of independence gap one, was denoted by $I_2$ in the paper by Barbosa and Hartnell~\cite{MR1676478}. They characterized almost well-covered simplicial graphs and gave a sufficient condition for a chordal graph to be almost well-covered. Ekim et al.~\cite{DBLP:journals/corr/abs-1708-04632} investigated almost well-covered graphs of girth at least six.

Clearly, every graph with independence gap at most $k$ has the property that its maximal independent sets are of at most $k+1$ different sizes. Finbow, Hartnell, and Whitehead denoted in~\cite{awc_girth8} by $M_k$ the class of graphs that have maximal independent sets of exactly $k$ different sizes. These graphs were studied further by Hartnell and Rall~\cite{MR3053598} and by Barbosa et al.~\cite{MR3056978}. Since every graph with independence gap at most $k$ is in $M_r$ for some $r\in \{1,\ldots, k+1\}$, some results on graphs in classes $M_r$ have implications for graphs with independence gap at most $k$. For example, a result due to Barbosa et al.~\cite[Theorem 2]{MR3056978} implies that for every $k$ and $d$, there are only finitely many connected graphs with independence gap at most $k$, minimum degree at least $2$, maximum degree at most $d$, and girth at least~$7$.

\medskip
\noindent{\bf Structure of the paper.} Each of Sections~\ref{sec:indgap},~\ref{sec:computing-alpha},~and~\ref{sec:hereditary} is devoted to one of the three main themes outlined above. We conclude the paper in Section \ref{sec:sum} with a table summarizing the results of this paper along with related results from the literature and some open questions.

\subsection{Preliminaries}

All graphs considered in this paper are finite, simple, and undirected.
A \emph{clique} in a graph is a set of pairwise adjacent vertices. A clique is \emph{maximal} if it is not contained in any larger clique. The \emph{clique number} of a graph $G$, denoted by $\omega(G)$, is the maximum size of a clique in $G$. For a graph $G$, its complement is denoted by $\overline G$. As usual, we denote the $n$-vertex path and complete graph by $P_n$ and $K_n$, respectively. The complete bipartite graph with parts of sizes $m$ and $n$ is denoted by $K_{m,n}$.
Given two vertex sets $A$ and $B$ in a graph $G$, we say that $A$ \emph{dominates} $B$ if every vertex in $B$ has a neighbor in $A$. In particular, a set $S$ of vertices in a graph $G$ is a \emph{dominating set} if $S$ dominates $V(G)\setminus S$. The {\sc Independent Set} problem takes as input a graph $G$ and an integer $k$ and the task is to determine whether $\alpha(G)\ge k$. Similarly, given a graph $G$ and an integer $k$, the problem of deciding whether $i(G)\le k$ is the {\sc Independent Domination} problem. Note that an independent set $I$ in a graph $G$ is a dominating set if and only if $I$ is a maximal independent set. The {\sc Weighted Independent Set} problem takes as input a vertex-weighted graph and the task is to compute an independent set of maximum total weight.

Given a set of graphs $\mathcal{F}$, a graph $G$ is said to be \emph{$\mathcal{F}$-free} if it contains no induced subgraph isomorphic to a member of $\mathcal{F}$. If $\mathcal{F} = \{H\}$ for a graph $H$, we write $H$-free instead of $\{H\}$-free. A class of graphs is \emph{hereditary} if it is closed under vertex deletions. It is well known that every hereditary class of graphs can be characterized by a set of \emph{forbidden induced subgraphs}, that is, there exists a unique minimal set $\mathcal{F}$ of graphs such that a graph $G$ is in the class if and only if $G$ is $\mathcal{F}$-free.

\section{Independence gap of perfect graphs}\label{sec:indgap}

We start with some preliminary observations about the computational complexity of the problem of computing the independence gap. First, let us notice that recognizing graphs of any constant independence gap is co-NP-complete.

\begin{proposition}\label{prop:all}
For every fixed integer $k\ge 0$, it is co-NP-complete to determine if a given graph $G$ satisfies $\mu_{\alpha}(G)\le k$.
\end{proposition}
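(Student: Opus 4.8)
The plan is to prove co-NP-completeness by reducing from the known co-NP-complete problem of recognizing well-covered graphs, i.e.\ deciding whether $\mu_\alpha(G) \le 0$, whose hardness is cited in the excerpt~\cite{MR1161178,MR1217991}. Membership in co-NP is straightforward: a certificate that $\mu_\alpha(G) > k$ is simply a pair of maximal independent sets whose sizes differ by more than $k$ (or, equivalently, a single maximal independent set $I$ together with an independent set $J$ with $|J| - |I| > k$, from which a maximal independent set of size at least $|J|$ can be extracted in polynomial time). The maximality of a given set and the comparison of sizes are checkable in polynomial time, so the complement of our problem is in NP.

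For the hardness reduction, the key idea is a gadget that inflates the independence gap by a controlled constant amount, turning the threshold-$0$ question into the threshold-$k$ question. Given an instance $G$ of the well-coveredness problem, I would construct a graph $G'$ by taking $G$ together with $k$ disjoint copies of some small fixed graph $H$ of independence gap exactly $1$ (for instance the path $P_4$, whose maximal independent sets have sizes $1$ and $2$), attached as separate connected components. Since the maximal independent sets of a disjoint union are exactly the unions of maximal independent sets of the components, we obtain $\alpha(G') = \alpha(G) + k\,\alpha(H)$ and $i(G') = i(G) + k\,i(H)$, so that
\[
\mu_\alpha(G') = \mu_\alpha(G) + k\,\mu_\alpha(H) = \mu_\alpha(G) + k.
\]
Hence $\mu_\alpha(G') \le k$ if and only if $\mu_\alpha(G) \le 0$, i.e.\ if and only if $G$ is well-covered. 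The construction is clearly polynomial (it adds $O(k)$ vertices), which completes the reduction.

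The additivity of $\alpha$, $i$, and therefore $\mu_\alpha$ under disjoint union is the crux of the argument, and it is the step I would state carefully, since the whole reduction hinges on the gap of the gadget adding exactly, not merely at most, $k$ to the gap of $G$. I do not anticipate a serious obstacle here, as disjoint-union additivity of these parameters is elementary; the only point requiring a moment's care is confirming that the chosen gadget $H$ indeed has $\mu_\alpha(H) = 1$ and that $i(H)$ and $\alpha(H)$ are as claimed, which is immediate for a concrete choice such as $P_4$. An alternative, if one prefers to avoid invoking an external hardness result for well-coveredness, would be to reduce directly from a core NP-hard problem via an existing gap-one construction, but reducing from well-coveredness is the cleanest route and makes the proposition an immediate corollary of the cited results together with the disjoint-union identity.
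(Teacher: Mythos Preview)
Your approach is essentially the same as the paper's: both reduce from the co-NP-complete problem of recognizing well-covered graphs by taking the disjoint union of the input graph with a gadget of known independence gap, exploiting the additivity of $\mu_\alpha$ under disjoint union. The paper uses a single copy of a star rather than $k$ copies of a gap-one graph, but the underlying idea is identical.

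There is, however, a concrete error in your choice of gadget: the path $P_4$ is well-covered. Its maximal independent sets are $\{v_1,v_3\}$, $\{v_1,v_4\}$, and $\{v_2,v_4\}$, all of size~$2$, so $\mu_\alpha(P_4)=0$, not~$1$. With $P_4$ as the gadget, your construction gives $\mu_\alpha(G')=\mu_\alpha(G)$ and the reduction collapses. The fix is immediate: use $P_3$ (equivalently $K_{1,2}$) instead, whose maximal independent sets are $\{v_2\}$ and $\{v_1,v_3\}$, so that $\mu_\alpha(P_3)=1$. With that correction your argument goes through and matches the paper's.
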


\begin{proof}
Given a no instance $G$ to the problem, a short certificate for the fact that $\mu_{\alpha}(G)> k$ consists of two sets $(I_1, I_2)$ of vertices such that $I_1$ and $I_2$ are maximal independent sets and $||I_1|-|I_2||\ge k+1$.
Since these conditions for $I_1$ and $I_2$ can be verified in polynomial time, the problem is in co-NP.

We prove NP-hardness using a reduction from the co-NP-complete problem of recognizing well-covered graphs~\cite{MR1161178,MR1217991}. Let $G$ be the input weakly chordal graph for the recognition of well-coveredness and let $G'$ be the disjoint union of $G$ and the complete bipartite graph $K_{1,k}$. Then, it can be observed that $G$ is well-covered if and only if $G'$ has independence gap at most $k$.
\end{proof}

A graph $G$ is said to be \emph{weakly chordal} if neither $G$ nor its complement contain an induced cycle of length at least $5$. The reduction showing co-NP-completeness of the problem of recognizing well-covered graphs~from~\cite{MR1161178,MR1217991} actually shows that the problem is co-NP-complete even when restricted to the class of weakly chordal graphs.
Since the reduction in the proof of Proposition~\ref{prop:all} maps a weakly chordal graph $G$ to a weakly chordal graph $G'$, we thus obtain the following.

\begin{corollary}\label{cor:w-c}
For every fixed integer $k\ge 0$, it is co-NP-complete to determine if a given weakly chordal graph $G$ satisfies $\mu_{\alpha}(G)\le k$.
\end{corollary}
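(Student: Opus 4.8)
The plan is to observe that Corollary~\ref{cor:w-c} follows almost immediately from the proof of Proposition~\ref{prop:all}, once two additional facts are in place. Membership in co-NP is inherited verbatim: the same short certificate used in Proposition~\ref{prop:all} (a pair of maximal independent sets whose sizes differ by more than $k$) still certifies a no-instance, and weakly chordal graphs form a subclass of all graphs, so nothing changes. The entire burden is therefore to establish co-NP-hardness within the class of weakly chordal graphs.

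For the hardness I would reuse the exact reduction $G \mapsto G' = G \sqcup K_{1,k}$ from the proof of Proposition~\ref{prop:all}, together with the correctness statement proved there, namely that $G$ is well-covered if and only if $\mu_\alpha(G')\le k$. Two things must then be checked. First, the source problem, recognizing well-coveredness, is already co-NP-complete when the input is restricted to weakly chordal graphs; this is precisely the strengthening of~\cite{MR1161178,MR1217991} noted in the paragraph preceding the corollary (the standard reduction producing hard instances already outputs weakly chordal graphs), so I would simply invoke it. Second, the map $G \mapsto G'$ must send weakly chordal graphs to weakly chordal graphs. Granting both, the reduction restricts to weakly chordal instances and the corollary follows.

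The substance of the argument is the second point, and it is the step I expect to be the main obstacle. I would prove the slightly more general statement that the disjoint union of two weakly chordal graphs is weakly chordal, and then apply it to $G$ (weakly chordal by hypothesis) and $K_{1,k}$ (a star, hence a tree, hence chordal and in particular weakly chordal). To prove the general statement, recall that a graph $H$ is weakly chordal if and only if neither $H$ nor $\overline H$ contains an induced cycle of length at least five. For $H = G_1 \sqcup G_2$, any induced cycle is connected and therefore lies entirely inside one component, so a long induced cycle in $H$ forces one in $G_1$ or in $G_2$. For the complement, note that $\overline{G_1 \sqcup G_2}$ is the join of $\overline{G_1}$ and $\overline{G_2}$, in which every vertex of one part is adjacent to every vertex of the other. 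The key observation is that if an induced cycle met both parts, then each part could contribute at most two of its vertices (a cycle-vertex in one part is adjacent to every cycle-vertex in the other part, yet has exactly two neighbours on the cycle), forcing the cycle to have length at most four; hence any induced cycle of length at least five in the join lies within a single part and thus comes from $\overline{G_1}$ or $\overline{G_2}$.

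Combining the two cases shows that $H$ and $\overline H$ are free of induced cycles of length at least five exactly when both $G_1$ and $G_2$ are weakly chordal, which establishes that the disjoint union of weakly chordal graphs is weakly chordal and completes the verification that the reduction preserves weak chordality. I anticipate no difficulty beyond this structural lemma, since everything else is an appeal to Proposition~\ref{prop:all} and to the already-known hardness of well-coveredness on weakly chordal graphs.
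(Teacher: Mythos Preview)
Your proposal is correct and follows exactly the approach the paper takes: the paper simply observes, in the paragraph preceding the corollary, that well-coveredness is co-NP-complete on weakly chordal graphs and that the reduction of Proposition~\ref{prop:all} maps weakly chordal graphs to weakly chordal graphs. You supply more detail than the paper does by actually proving closure of weakly chordal graphs under disjoint union, but the strategy is identical.
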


\begin{corollary}\label{cor:complexity}
The problem of computing the independence gap of a given graph is NP-hard. \end{corollary}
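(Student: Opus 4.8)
The plan is to derive this corollary directly from Proposition~\ref{prop:all}, understanding NP-hardness of this value-computation (function) problem in the standard sense of polynomial-time Turing reductions. First I would observe that any algorithm computing $\mu_\alpha(G)$ exactly can serve as a subroutine for the decision problem ``$\mu_\alpha(G) \le k$'' for any fixed $k$: one runs the algorithm and compares the returned value with $k$. In particular, taking $k = 0$, such an algorithm decides in polynomial time (plus a single call to the computation oracle) whether a given graph is well-covered, since $\mu_\alpha(G) = 0$ precisely when $G$ is well-covered.

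The second step is to invoke the hardness of the associated decision problem. By Proposition~\ref{prop:all} with $k = 0$, or equivalently by the cited results~\cite{MR1161178,MR1217991}, recognizing well-covered graphs is co-NP-complete. Because a polynomial-time Turing reduction is free to negate the answer of its oracle, any co-NP-complete problem is NP-hard with respect to Turing reductions; hence the well-coveredness problem is NP-hard. Composing this with the reduction from the previous paragraph shows that computing $\mu_\alpha(G)$ is NP-hard as well.

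The only point that requires care — rather than a genuine obstacle — is the reduction model. Since the source problem is co-NP-complete rather than NP-complete, the argument naturally yields a Turing (Cook) reduction rather than a many-one (Karp) reduction, which is the appropriate notion of hardness for a problem whose output is a numerical value rather than a yes/no answer. No construction beyond the single oracle query described above is needed, so the proof is essentially immediate once the reduction model is fixed.
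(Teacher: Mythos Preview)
Your argument is correct and is precisely the intended one: the paper states this corollary without proof, as an immediate consequence of Proposition~\ref{prop:all} (equivalently, of the co-NP-completeness of recognizing well-covered graphs), and your write-up simply makes explicit the obvious Turing reduction from the decision problem to the computation problem. Your remark about the reduction model is accurate and appropriate for a function problem.
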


In view of Corollaries~\ref{cor:w-c} and~\ref{cor:complexity}, it is interesting to identify restrictions on the input graphs under which the independence gap can be computed in polynomial time, or, when this is not possible (unless P = NP), whether one can at least efficiently recognize graphs in the class that are of constant independence gap.

Clearly, the independence gap is computable in polynomial time in any class of graphs in which both the independence number and the independent domination number are polynomially computable. This includes the classes of chordal graphs \cite{gavril1972algorithms,MR687354}, circular arc graphs \cite{MR1143909,MR1622646}, permutation graphs~\cite{MR800722} and, more generally, cocomparability graphs~\cite{kohler2016linear,MR1229694} and AT-free graphs \cite{broersma1999independent}, and graphs of bounded clique-width~\cite{MR1739644}. On the other hand, the problem of computing the independence gap is NP-hard in any class of graphs in which the independence number can be computed in polynomial time but computing the independent domination number is NP-hard. This is the case for example for the classes of line graphs~\cite{MR579424}, bipartite graphs~\cite{MR754426}, and weakly chordal graphs~\cite{MR2207507} (which also follows from Corollary~\ref{cor:w-c}).\footnote{The problem of computing the independence gap is also NP-hard in any class of graphs in which the independent domination number can be computed in polynomial time but computing the independence number is NP-hard. However, we are not aware of any natural graph class with this property.}

Note that bipartite graphs are perfect and do not contain cliques of size three. Therefore, for every $p\ge 3$, it is NP-hard to compute the independence gap in the class of $K_p$-free perfect graphs. However, as we show next, in every such graph class there is a polynomial-time algorithm to recognize graphs of constant independence gap. The result will rely on a characterization of independence gap of a semi-perfect graph. A graph $G$ is said to be \emph{semi-perfect} if $\alpha(G)=\theta(G)$, where $\theta(G)$ denotes the clique cover number of $G$, that is, the minimum number of cliques covering $V(G)$. In other words, a graph $G$ is semi-perfect if and only if $\omega(\overline{G})=\chi(\overline{G})$, where $\omega(\overline{G})$ denotes the clique number of $\overline{G}$ and $\chi(\overline{G})$ its chromatic number. Recall that a graph $G$ is \emph{perfect} if $\chi(G) = \omega(H)$ for every induced subgraph $H$ of $G$. Since the complement of a perfect graph is perfect~\cite{MR0302480}, every perfect graph is semi-perfect. A \emph{clique partition} of a graph $G$ is a set of pairwise disjoint cliques with union $V(G)$. Thus, a graph is semi-perfect if and only if it has a clique partition of size $\alpha(G)$. We will refer to such a clique partition as an \emph{$\alpha$-clique partition} of $G$.

A clique is called \emph{strong} if it intersects every maximal independent set. A graph admitting a partition of its vertex set into strong cliques is called \emph{localizable}; these graphs are studied in \cite{localizable} (see also~\cite{HMR2018}). Here, we extend the concept defining localizable graphs as follows.

\begin{definition}
Let $k$ be a positive integer. A clique partition of $G$ is said to be \emph{$k$-tight} if the union of every $k$ cliques in the partition intersects all maximal independent sets. Furthermore, a clique partition is \emph{tight} if it is $1$-tight.
\end{definition}

Notice that a tight clique partition is exactly a partition of $V(G)$ into strong cliques; thus, localizable graphs are exactly the graphs admitting a tight clique partition. In Theorem 2.1 of \cite{localizable}, it has been shown that localizable graphs are exactly the graphs that are semi-perfect and well-covered. Several other characterizations of localizable graphs were given in that theorem. Using the concepts of independence gap and tight clique partitions, the theorem can be stated as follows.

\begin{theorem}[Hujdurovi\'c et al.~\cite{localizable}]\label{thm:alpha-chi-bar}
For every graph $G$, the following statements are equivalent.
\begin{enumerate}
  \item[(a)] $G$ has a tight clique partition.
  \item[(b)] $G$ has a tight $\alpha$-clique partition.
  \item[(c)] $G$ has an $\alpha$-clique partition and every $\alpha$-clique partition is tight.
  \item[(d)] $G$ is a semi-perfect graph with $\mu_\alpha(G) = 0$.
  \item[(e)] $i(G) = \theta(G)$.
\end{enumerate}
\end{theorem}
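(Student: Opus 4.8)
The plan is to prove the equivalences by running the cycle (a) $\Rightarrow$ (d) $\Rightarrow$ (c) $\Rightarrow$ (b) $\Rightarrow$ (a), and then to treat (e) separately through a short chain of inequalities. The engine driving everything is a single counting observation that I would record first: if $\mathcal{C}$ is any clique partition of $G$, then every independent set $I$ meets each clique in at most one vertex, so $|I| \le |\mathcal{C}|$; moreover, equality $|I| = |\mathcal{C}|$ forces $I$ to be a transversal of $\mathcal{C}$, meeting every clique in exactly one vertex. Applied to a minimum and a maximum maximal independent set and to a minimum clique partition, this observation yields the standard inequalities $i(G) \le \alpha(G) \le \theta(G)$, which I would keep in hand throughout.

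For (a) $\Rightarrow$ (d), I would start from a tight clique partition $\mathcal{C}$, so that every clique of $\mathcal{C}$ is strong. Taking an arbitrary maximal independent set $I$, strongness gives that each clique meets $I$, while the counting observation gives that each clique meets $I$ in at most one vertex; hence $I$ is a transversal of $\mathcal{C}$ and $|I| = |\mathcal{C}|$. Since $I$ was arbitrary, all maximal independent sets share the common size $|\mathcal{C}|$, so $i(G) = \alpha(G) = |\mathcal{C}|$, that is, $\mu_\alpha(G) = 0$. Combining with $\alpha(G) \le \theta(G) \le |\mathcal{C}| = \alpha(G)$ gives $\alpha(G) = \theta(G)$, so $G$ is semi-perfect, establishing (d).

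For (d) $\Rightarrow$ (c), semi-perfectness gives a clique partition of size $\alpha(G)$, so an $\alpha$-clique partition exists and the first half of (c) holds. To see that every $\alpha$-clique partition $\mathcal{C}$ is tight, I would take an arbitrary maximal independent set $I$; since $\mu_\alpha(G) = 0$ we have $|I| = i(G) = \alpha(G) = |\mathcal{C}|$, and the counting observation forces $I$ to meet every clique of $\mathcal{C}$. As $I$ was arbitrary, every clique is strong, so $\mathcal{C}$ is tight. The implications (c) $\Rightarrow$ (b) and (b) $\Rightarrow$ (a) are then immediate from the definitions, since an $\alpha$-clique partition that is tight is in particular a tight clique partition. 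Finally, for (d) $\Leftrightarrow$ (e) I would invoke $i(G) \le \alpha(G) \le \theta(G)$: condition (e), namely $i(G) = \theta(G)$, squeezes this chain to the equalities $i(G) = \alpha(G) = \theta(G)$, which is precisely semi-perfectness together with $\mu_\alpha(G) = 0$; conversely, (d) gives $i(G) = \alpha(G) = \theta(G)$ and hence (e).

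There is no genuinely hard step here; once the transversal observation is isolated, the whole argument is elementary bookkeeping with the inequality chain. The one point that requires care, and where the real content lies, is in (d) $\Rightarrow$ (c): one must not conflate \emph{the existence of} a tight $\alpha$-clique partition with the statement that \emph{every} $\alpha$-clique partition is tight, and it is exactly well-coveredness ($\mu_\alpha(G) = 0$) that upgrades existence to universality.
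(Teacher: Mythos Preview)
Your proof is correct. The counting/transversal observation is exactly the right engine, and the cycle (a) $\Rightarrow$ (d) $\Rightarrow$ (c) $\Rightarrow$ (b) $\Rightarrow$ (a) together with the squeeze $i(G)\le \alpha(G)\le \theta(G)$ for (d) $\Leftrightarrow$ (e) covers all equivalences cleanly. No step is missing or faulty.

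As for comparison with the paper: the paper does not actually prove this theorem. It is quoted as a result from Hujdurovi\'c, Milani\v{c}, and Ries~\cite{localizable} (their Theorem~2.1, restated here in the language of independence gap and tight clique partitions), and no proof is supplied in the present paper. Your argument is essentially the standard one and matches in spirit what the paper does prove immediately afterward for the generalization to $k$-tight partitions (Theorem~\ref{thm:semiperfect}), where the same transversal-counting idea shows that a maximal independent set missing $k$ cliques of an $\alpha$-clique partition must have size at most $\alpha(G)-k$.
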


In particular, in view of the equivalence between property $(d)$ and the remaining properties, Theorem~\ref{thm:alpha-chi-bar} characterizes which semi-perfect graphs are well-covered. We now generalize this result to a characterization of semi-perfect graphs with any upper bound on the independence gap.

\begin{theorem}\label{thm:semiperfect}
For every semi-perfect graph $G$ and positive integer $k$, the following statements are equivalent.
\begin{enumerate}
  \item[(a)] $G$ has a $k$-tight clique partition.
  \item[(b)] $G$ has a $k$-tight $\alpha$-clique partition.
  \item[(c)] Every $\alpha$-clique partition of $G$ is $k$-tight.
  \item[(d)] $\mu_\alpha(G) \le k-1$.
\end{enumerate}
\end{theorem}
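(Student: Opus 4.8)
The plan is to reduce everything to a single counting identity relating, for a fixed clique partition, the size of a maximal independent set to the number of cliques it avoids. The key observation is that if $\mathcal{D} = \{D_1, \ldots, D_t\}$ is any clique partition of $G$ and $I$ is any maximal independent set, then $I$ meets each clique in at most one vertex (a clique and an independent set share at most one vertex), and distinct vertices of $I$ lie in distinct cliques (two vertices of $I$ in a common clique would be adjacent); hence the number of cliques of $\mathcal{D}$ that $I$ meets equals $|I|$, and the number it avoids equals $t - |I|$. I would first record this, together with the standard bound $t \ge \alpha(G)$ valid for every clique partition (a maximum independent set occupies $\alpha(G)$ distinct cliques). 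I would also rewrite $k$-tightness in its contrapositive-friendly form: a clique partition is $k$-tight if and only if every maximal independent set avoids at most $k-1$ of its cliques.

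With this dictionary in place, each implication becomes a one-line inequality, and I would organize them into the cycle (a) $\Rightarrow$ (d) $\Rightarrow$ (c) $\Rightarrow$ (b) $\Rightarrow$ (a). For (a) $\Rightarrow$ (d): a $k$-tight clique partition $\mathcal{D}$ of size $t$ gives $t - |I| \le k-1$, i.e.\ $|I| \ge t-k+1$, for every maximal independent set $I$; taking the smallest such set yields $i(G) \ge t - k + 1 \ge \alpha(G) - k + 1$, whence $\mu_\alpha(G) = \alpha(G) - i(G) \le k-1$. For (d) $\Rightarrow$ (c): if $\mu_\alpha(G) \le k-1$ then $i(G) \ge \alpha(G) - k + 1$, so for any $\alpha$-clique partition, which has exactly $\alpha(G)$ cliques, every maximal independent set avoids $\alpha(G) - |I| \le k-1$ cliques, i.e.\ the partition is $k$-tight. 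For (c) $\Rightarrow$ (b): semi-perfection guarantees that at least one $\alpha$-clique partition exists, and by (c) it is $k$-tight. Finally (b) $\Rightarrow$ (a) is immediate, since an $\alpha$-clique partition is in particular a clique partition.

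The step I expect to require the most care is (a) $\Rightarrow$ (d), precisely because (a) permits a clique partition with more than $\alpha(G)$ cliques, and one must check that this extra freedom does not weaken the conclusion. The point to verify is that a $k$-tight partition of size $t > \alpha(G)$ is in fact a \emph{stronger} hypothesis, since more cliques means more opportunities to be avoided; the bound $i(G) \ge t - k + 1$ it produces is therefore at least as strong as the one coming from an $\alpha$-clique partition, and the inequality $t \ge \alpha(G)$ is exactly what bridges back to $\mu_\alpha(G)$. Everywhere else the argument is a direct translation through the counting identity, and I would be careful to invoke semi-perfection only where it is genuinely needed, namely to guarantee the existence of an $\alpha$-clique partition in the passage (c) $\Rightarrow$ (b).
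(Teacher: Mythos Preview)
Your proposal is correct and follows essentially the same route as the paper: both prove the cycle $(a)\Rightarrow(d)\Rightarrow(c)\Rightarrow(b)\Rightarrow(a)$ via the observation that a maximal independent set meets exactly $|I|$ cliques of any clique partition, together with the bound $t\ge\alpha(G)$. The only cosmetic difference is that you isolate the reformulation ``$k$-tight $\Leftrightarrow$ every maximal independent set avoids at most $k-1$ cliques'' at the outset and argue each step directly, whereas the paper phrases $(a)\Rightarrow(d)$ and $(d)\Rightarrow(c)$ as proofs by contradiction; the underlying counting is identical.
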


\begin{proof}
The implication $(b)\Rightarrow (a)$ is trivial. Since $G$ is semi-perfect, it has an $\alpha$-clique partition, which establishes the implication $(c)\Rightarrow (b)$.

$(a) \Rightarrow (d)$: We need to show that if $G$ has a $k$-tight clique partition, then $\mu_\alpha(G)\leq k-1$. Let $\{C_1,\ldots,C_{\ell}\}$ be a $k$-tight clique partition of $G$. Then $\ell\geq \theta(G) = \alpha(G)$. Suppose for a contradiction that $\mu_\alpha(G)\geq k$. Then there exists a maximal independent set $I$ of $G$ with $|I|\leq \alpha(G)-k$. Let $J'=\{j\mid I\cap C_j=\emptyset\}$. Since $I$ can contain at most one vertex from each clique in $C_1,\ldots,C_{\ell}$, the definition of $J'$ implies that $|J'| = \ell-|I|\geq \alpha(G)-|I| \geq k$. It is now clear that for every $J\subseteq J'$ of size $k$, we have $I\cap (\cup_{j\in J}C_j)=\emptyset$. This contradicts the assumption that $\{C_1,\ldots,C_{\ell}\}$ is $k$-tight. It follows that $\mu_\alpha(G)\leq k-1$.

$(d) \Rightarrow (c)$: Let $G$ be a semi-perfect graph with $\mu_\alpha(G)\leq k-1$. Since $G$ is semi-perfect, $G$ has an $\alpha$-clique partition. Consider an $\alpha$-clique partition $\{C_1,\ldots,C_{\alpha(G)}\}$ of $G$ and assume for a contradiction that it is not $k$-tight. Then there exists a set $J \subseteq \{1,\ldots,\alpha(G)\}$ with $|J|=k$ and a maximal independent set $I$ such that $I \cap C_j=\emptyset$  for every $j\in J$. Hence, $|I|\leq \alpha(G)-|J|=\alpha(G)-k$. It follows that $\alpha(G)-|I|\geq k$, contradicting the assumption that $\mu_\alpha(G)\leq k-1$.
\end{proof}

\begin{corollary}\label{cor:semiperfect-value-of-the-gap}
For every semi-perfect graph $G$, we have
$$\mu_\alpha(G) = \min\{k\ge 1\mid G\textrm{ has a $k$-tight clique cover}\}-1\,.$$
\end{corollary}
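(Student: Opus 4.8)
The plan is to read the formula directly off the equivalence $(a)\Leftrightarrow(d)$ in Theorem~\ref{thm:semiperfect}, so this will be pure bookkeeping on top of that result. Write $m := \min\{k\ge 1\mid G\text{ has a $k$-tight clique partition}\}$ for the quantity appearing on the right-hand side (before subtracting $1$). The heart of the argument is the observation that, by Theorem~\ref{thm:semiperfect}, for each positive integer $k$ the graph $G$ admits a $k$-tight clique partition if and only if $\mu_\alpha(G)\le k-1$, that is, if and only if $k\ge \mu_\alpha(G)+1$. Hence the set over which we are minimizing is exactly $\{k\ge 1\mid k\ge \mu_\alpha(G)+1\}$.

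I would then evaluate this minimum. Since the independence gap is always nonnegative, we have $\mu_\alpha(G)+1\ge 1$, so the defining constraint $k\ge 1$ is subsumed by $k\ge \mu_\alpha(G)+1$ and does not raise the minimum. Therefore $m=\mu_\alpha(G)+1$, which rearranges to $\mu_\alpha(G)=m-1$, as claimed. It remains only to check that $m$ is well defined: the minimizing set is bounded below by $1$ by definition, and it is nonempty because, taking $k:=\mu_\alpha(G)+1\ge 1$, property $(d)$ of Theorem~\ref{thm:semiperfect} holds with this $k$, so by $(d)\Rightarrow(c)\Rightarrow(b)\Rightarrow(a)$ (using that $G$, being semi-perfect, has an $\alpha$-clique partition) $G$ indeed has a $k$-tight clique partition.

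I do not expect any genuine obstacle here, as the statement is an immediate corollary of Theorem~\ref{thm:semiperfect}. The only point that requires a moment's care is verifying that restricting $k$ to the positive integers does not shift the minimum away from $\mu_\alpha(G)+1$; this is exactly where one invokes the nonnegativity $\mu_\alpha(G)\ge 0$.
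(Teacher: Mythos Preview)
Your proposal is correct and follows exactly the intended route: the paper states this corollary without proof, regarding it as immediate from the equivalence $(a)\Leftrightarrow(d)$ in Theorem~\ref{thm:semiperfect}, and you have simply spelled out that bookkeeping carefully (including the nonnegativity check $\mu_\alpha(G)\ge 0$ ensuring the minimum is attained at $\mu_\alpha(G)+1$).
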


Theorem~\ref{thm:semiperfect} has the following algorithmic consequence.

\begin{theorem}\label{thm:semi-perfect}
For every pair of constants $p$ and $k$, given a $K_p$-free semi-perfect graph $G$ equipped with an optimal clique partition,
it can be determined in polynomial time whether $\mu_\alpha(G)\le k$.
\end{theorem}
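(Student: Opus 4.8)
The plan is to reduce the question to a tightness test on the given clique partition and then exploit the bounded size of unions of few cliques. Write $\{C_1,\dots,C_\ell\}$ for the given optimal clique partition; since $G$ is semi-perfect and the partition is optimal, $\ell=\theta(G)=\alpha(G)$, so this is an $\alpha$-clique partition and $\alpha(G)$ is read off directly as $\ell$. Applying Theorem~\ref{thm:semiperfect} with parameter $k+1$, the condition $\mu_\alpha(G)\le k$ is equivalent to every $\alpha$-clique partition of $G$ being $(k+1)$-tight, and in particular to the \emph{given} partition being $(k+1)$-tight: if $\mu_\alpha(G)\le k$ then property~(c) forces the given partition to be $(k+1)$-tight, while conversely a $(k+1)$-tight clique partition yields $\mu_\alpha(G)\le k$ through the implication $(a)\Rightarrow(d)$. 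Hence it suffices to decide whether the given partition is $(k+1)$-tight.

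By definition, the partition fails to be $(k+1)$-tight precisely when there is a set $J\subseteq\{1,\dots,\ell\}$ with $|J|=k+1$ such that the union $U_J=\bigcup_{j\in J}C_j$ is avoided by some maximal independent set. I would iterate over all $\binom{\ell}{k+1}=O(n^{k+1})$ such sets $J$ and, for each, call a subroutine deciding whether $G$ has a maximal independent set disjoint from $U_J$. The crucial structural point is that, since $G$ is $K_p$-free, every clique of the partition has at most $p-1$ vertices, so $|U_J|\le (k+1)(p-1)$, a constant depending only on $p$ and $k$.

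The subroutine is where this constant bound is used. Writing $U=U_J$, I claim that $G$ has a maximal independent set avoiding $U$ if and only if there is an independent set $D\subseteq V(G)\setminus U$ with $|D|\le|U|$ that dominates $U$. For the forward direction, given a maximal independent set $I$ with $I\cap U=\emptyset$, maximality forces every $u\in U$ to have a neighbor in $I$; choosing one such neighbor per vertex of $U$ yields a set $D\subseteq I$ of the required form. For the converse, given such a $D$, I would extend it by any maximal independent set $M$ of $G[R]$, where $R=V(G)\setminus(N[D]\cup U)$ and $N[D]$ is the set consisting of $D$ together with its neighbors; a direct check shows that $D\cup M$ is independent, avoids $U$, and is maximal in $G$, since every vertex outside $D\cup M$ lies in $N(D)$, or in $U$ and is hence dominated by $D$, or in $R$ and is hence dominated by $M$. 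As $|U|\le(k+1)(p-1)$ is constant, the candidate sets $D$ can be enumerated in time $O(n^{(k+1)(p-1)})$, and for each the independence, domination, and greedy-extension checks take polynomial time.

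Combining the two levels of enumeration gives an overall running time of $O\!\left(n^{k+1}\cdot n^{(k+1)(p-1)}\cdot\mathrm{poly}(n)\right)$, which is polynomial for fixed $p$ and $k$. I expect the subroutine to be the main obstacle: deciding whether some maximal independent set avoids a prescribed vertex set is closely tied to the intractable independent domination problem, and the argument only becomes tractable because $K_p$-freeness caps the size of each $U_J$, allowing the dominating set $D$ to be guessed by brute force. The remaining steps — reading off $\alpha(G)$, translating $\mu_\alpha(G)\le k$ into $(k+1)$-tightness via Theorem~\ref{thm:semiperfect}, and verifying the equivalence underlying the subroutine — are routine.
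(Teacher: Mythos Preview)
Your proof is correct and follows essentially the same route as the paper: reduce $\mu_\alpha(G)\le k$ to $(k+1)$-tightness of the given $\alpha$-clique partition via Theorem~\ref{thm:semiperfect}, observe that failure of tightness is witnessed by a small independent set (of size at most $(k+1)(p-1)$) dominating the union of $k+1$ cliques, and enumerate. The only cosmetic difference is the loop order: you fix the $(k+1)$-set $J$ first and then search for a small dominating $D$, whereas the paper enumerates small independent sets $S$ first and then checks whether $S$ dominates at least $k+1$ cliques disjoint from it; this saves the outer $O(n^{k+1})$ factor but is otherwise the same argument.
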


\begin{proof}
Let $\mathcal{K}=\{C_1,\ldots,C_{\alpha(G)}$\} be an optimal clique partition of $G$. By Theorem~\ref{thm:semiperfect}, $G$ has independence gap at most $k$ if and only if $\mathcal{K}$ is $(k+1)$-tight. Testing whether $\mathcal{K}$ is $(k+1)$-tight can be done in polynomial time. Indeed, $\mathcal{K}$ is not $(k+1)$-tight if and only if for some subset $\mathcal{K'}$ of $(k+1)$ cliques from $\mathcal{K}$, there is an independent set that is disjoint from the union of cliques in $\mathcal{K'}$ that dominates the set of vertices in this union. Now, if such an independent set $I$ exists, then there is also an independent set $I'\subseteq I$ that is minimal with respect to the property of dominating $\mathcal{K'}$. By minimality, every vertex of $I'$ dominates at least one vertex in the union of cliques in $\mathcal{K'}$ that is not dominated by any other vertex of $I'$; thus, we conclude that $|I'|\leq (k+1)\omega(G)\le (k+1)(p-1)$.
Consequently, it is enough to check for every independent set $S$ of $G$ of size at most $(k+1)(p-1)$ whether $S$ dominates vertices in at least $k+1$ cliques from $\mathcal{K}$ that are disjoint from $S$.
Since $p$ and $k$ are constant, this can be done in polynomial time. \end{proof}

Using Theorem~\ref{thm:semi-perfect} we can now easily derive the following result.

\begin{corollary}\label{cor:perfect}
For every pair of constants $p$ and $k$, given any $K_p$-free perfect graph $G$, it can be decided in polynomial time whether $\mu_{\alpha}(G)\le k$.
\end{corollary}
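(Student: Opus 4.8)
The plan is to reduce Corollary~\ref{cor:perfect} to the already-established Theorem~\ref{thm:semi-perfect}. Since every perfect graph is semi-perfect (as noted in the excerpt, because the complement of a perfect graph is perfect, so $\alpha(G)=\theta(G)$), a $K_p$-free perfect graph is in particular a $K_p$-free semi-perfect graph. Theorem~\ref{thm:semi-perfect} already settles the recognition problem for $K_p$-free semi-perfect graphs, \emph{provided} the graph comes equipped with an optimal clique partition. Thus the entire content of the corollary is to show that for a $K_p$-free perfect graph we can compute such an optimal clique partition (an $\alpha$-clique partition) in polynomial time; once we have it, we simply hand it to the algorithm of Theorem~\ref{thm:semi-perfect}.

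First I would invoke the result of Gr\"otschel, Lov\'asz, and Schrijver (cited as~\cite{grotschel1981ellipsoid} in the excerpt) that on perfect graphs one can compute, in polynomial time, a minimum clique cover together with the clique cover itself. Because $\theta(G)=\alpha(G)$ for perfect graphs, a minimum clique cover is precisely an $\alpha$-clique partition (after discarding overlaps: if the returned cliques are not disjoint, one can greedily remove repeated vertices, keeping each vertex in exactly one clique, which only shrinks the cliques and keeps them cliques, and cannot increase the number of parts). This yields the optimal clique partition required as input to Theorem~\ref{thm:semi-perfect}. Note that the $K_p$-freeness hypothesis is not needed for this partition-computing step; it is only used in the subsequent call to Theorem~\ref{thm:semi-perfect}, where the bound $\omega(G)\le p-1$ makes the enumeration of small independent sets polynomial.

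Having computed the $\alpha$-clique partition $\mathcal{K}$, I would run the test of Theorem~\ref{thm:semi-perfect}, which decides in polynomial time (for fixed $p,k$) whether $\mathcal{K}$ is $(k+1)$-tight, equivalently whether $\mu_\alpha(G)\le k$. Stringing these two polynomial-time steps together gives the claimed polynomial-time algorithm, completing the proof.

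The only subtle point, and the step I would watch most carefully, is the passage from a minimum clique \emph{cover} to a clique \emph{partition} of the same (minimum) size: one must argue that the GLS algorithm's output can be made vertex-disjoint without increasing the number of cliques. This is routine — greedily assign each vertex to one of the cliques containing it and delete it from the others, noting that subsets of cliques are cliques and that no clique need be created — but it is the one place where a careless argument could slip. Everything else is a direct appeal to results already available: perfection implies semi-perfection, GLS gives the clique cover, and Theorem~\ref{thm:semi-perfect} does the rest.
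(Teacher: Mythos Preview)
Your proposal is correct and essentially identical to the paper's proof: invoke Gr\"otschel--Lov\'asz--Schrijver to compute an optimal clique partition in polynomial time, note that perfect implies semi-perfect, and apply Theorem~\ref{thm:semi-perfect}. Your extra paragraph on turning a minimum clique cover into a clique partition of the same size is more careful than the paper (which simply cites~\cite{grotschel1981ellipsoid} for the partition), but it is correct and does not change the approach.
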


\begin{proof}
Let $G$ be a $K_p$-free perfect graph. An optimal clique partition of $G$ can be computed in polynomial time~\cite{grotschel1981ellipsoid}. Since every perfect graph is semi-perfect, the result follows from Theorem \ref{thm:semi-perfect}.
\end{proof}

Corollary~\ref{cor:perfect} implies that for every $k\ge 0$, there is a polynomial-time algorithm for recognizing bipartite graphs with independence gap at most $k$. It also generalizes the fact that for every positive integer $p$, there is a polynomial-time algorithm for checking whether a given $K_p$-free perfect graph is well-covered. This was proved by Dean and Zito~\cite{MR1264476}.

\medskip
\begin{remark}
Unless P = NP, the assumptions that $p$ and $k$ are constant are necessary in Corollary~\ref{cor:perfect}. Indeed, as noted in Corollary~\ref{cor:w-c}, for every integer $k\ge 0$, it is co-NP-complete to determine if a given weakly chordal graph $G$ satisfies $\mu_{\alpha}(G)\le k$. Since weakly chordal graphs are perfect~\cite{MR815392}, the same conclusion holds for perfect graphs. Furthermore, as already observed above, the NP-hardness of computing the independence gap in the class of bipartite graphs implies that for every integer $p\ge 3$, it is co-NP-complete to determine whether a given $K_p$-free perfect graph $G$ and integer $k$ satisfy $\mu_\alpha(G)\le k$.
On the other hand, we do not know whether the assumption of perfection is necessary. For every $k\ge 0$ and $p \ge 3$, the complexity of recognizing graphs of independence gap at most $k$ in the class of $K_p$-free graphs is open. In particular, for $k = 0$, the problem becomes that of testing if a given triangle-free graph is well-covered, the complexity of which already seems to be open (cf.~\cite{MR1254158}).
\end{remark}

\section{{\sc Independent Set} in graphs of small independence gap}\label{sec:computing-alpha}

We now turn our attention to the complexity of {\sc Independent Set} when restricted to graphs with constant independence gap. This is an interesting problem in view of the fact that in any such class of graphs a constant additive approximation to a maximum independent set can be obtained in linear time by a simple greedy algorithm.
Graphs of independence gap zero are exactly the well-covered graphs and for such graphs any maximal independent set is also maximum. It follows that {\sc Independent Set} is solvable in linear time in the class of graphs of independence gap (at most)~$0$. In contrast with this fact,
we show that restricting the sizes of maximal independent sets to at most two consecutive values does not render the problem any easier than in general graphs; that is, {\sc Independent Set} remains NP-complete in the class of graphs of independence gap at most one (and consequently, in the class of graphs of independence gap at most $k$, for every positive integer~$k$).

In the proof of the following theorem, we reformulate a reduction from \cite{robust03} which was used to show that determining whether a well-covered graph contains an independent set of size $k$ cannot be solved in polynomial time by a so-called ``robust'' algorithm unless P = NP. The same reduction allows us to show that the recognition of well-covered graphs remains co-NP-complete when restricted to graphs having independence gap at most one.

\begin{theorem}\label{thm:hardness}
The following statements hold:
\begin{enumerate}
\item [i)] {\sc Independent Set} is NP-complete in the class of graphs with independence gap at most $1$.
\item [ii)] The problem of recognizing well-covered graphs is co-NP-complete in the class of graphs with independence gap at most $1$.
\end{enumerate}
\end{theorem}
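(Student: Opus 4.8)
The plan is to prove both parts at once, from a single polynomial-time reduction that I would obtain by reformulating the construction of~\cite{robust03}. Membership is immediate: {\sc Independent Set} is in NP, with an independent set of the required size as certificate, and non-well-coveredness is in NP, with certificate a pair of maximal independent sets of distinct sizes (so recognizing well-coveredness is in co-NP), exactly as in the proof of Proposition~\ref{prop:all}. The heart of the argument is a reduction from {\sc Independent Set} which, given a graph $H$ on $n$ vertices and an integer $k$, produces in polynomial time a graph $G$ together with an integer $t$ such that: (a) $\mu_\alpha(G)\le 1$; (b) $G$ always has a maximal independent set of size $t-1$; and (c) $G$ has a maximal independent set of size $t$ (equivalently $\alpha(G)=t$ and $G$ is not well-covered) if and only if $\alpha(H)\ge k$. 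Properties (a)--(c) force $i(G)=t-1$ in every case, so that $G$ is well-covered precisely when $\alpha(H)<k$.

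For the construction I would start from the graph obtained from $H$ by attaching a pendant vertex $p_i$ to each vertex $v_i$ (the \emph{corona} $H\circ K_1$). This graph is well-covered, and its maximal independent sets are exactly the sets $S\cup\{p_i : v_i\notin S\}$ as $S$ ranges over \emph{all} independent sets of $H$, each of size $n$; in particular the largest value of $|S|$ occurring is $\alpha(H)$. I would then attach a gadget whose sole effect is to permit one additional vertex to be added to such a set precisely when the underlying independent set $S$ has size at least $k$, contributing a fixed number of vertices otherwise. Choosing $t$ to be this fixed base plus one then yields (b) and (c): a ``bonus'' vertex creates a maximal independent set of size $t$ exactly when some $S$ with $|S|\ge k$ exists, that is, when $\alpha(H)\ge k$. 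This is precisely where the reformulation of~\cite{robust03} does the real work, since the gadget must realize the threshold condition ``at least $k$ of the $v_i$ are chosen'' while introducing no maximal independent set of size outside $\{t-1,t\}$.

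Granting (a)--(c), both statements follow. For part (i), property (c) gives $\alpha(G)\ge t$ if and only if $\alpha(H)\ge k$, so {\sc Independent Set} on $H$ reduces to {\sc Independent Set} on the gap-at-most-one graph $G$; with membership this yields NP-completeness. For part (ii), $G$ is well-covered if and only if $\alpha(H)<k$, so the reduction sends yes-instances of the NP-complete problem {\sc Independent Set} to non-well-covered graphs and no-instances to well-covered graphs, all of independence gap at most one; hence recognizing non-well-coveredness is NP-hard on this class, recognizing well-coveredness is co-NP-hard, and with co-NP membership it is co-NP-complete. The main obstacle lies entirely in establishing (a) together with the size window in (b)--(c): one must verify that every maximal independent set of $G$ has size $t-1$ or $t$, i.e.\ that the threshold gadget admits neither a small maximal independent set (which would push $i(G)$ below $t-1$ and enlarge the gap) nor one of size exceeding $t$ (which would enlarge it from above). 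Checking these extremal configurations of the gadget, rather than the logical equivalences, is where the care is required.
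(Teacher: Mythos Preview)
Your logical scaffolding---a single polynomial reduction from {\sc Independent Set} producing a graph with $\mu_\alpha\le 1$, $i=t-1$ always, and $\alpha=t$ iff the original instance is a yes-instance---matches the paper's strategy, and your derivation of (i) and (ii) from properties (a)--(c) is correct. The problem is that you never actually give the construction. Your proposed route (start from the corona $H\circ K_1$ and bolt on a threshold gadget that ``permits one additional vertex precisely when $|S|\ge k$'') is \emph{not} how the paper, or~\cite{robust03} as reformulated there, proceeds, and you defer the entire technical content to an unspecified gadget. The paper's reduction is quite different: given $(G,k)$ it creates $k$ copies $v_{i,1},\dots,v_{i,k}$ of each vertex $v_i$ together with $k(k-1)$ auxiliary $u$-type vertices $u_{i,j}$ ($i\ne j$), arranged in $k$ columns. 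Column edges make each column a clique (so $\alpha(G')\le k$); row and $G$-edges make a $v$-type transversal of the columns correspond to an independent set of size $k$ in $G$; and diagonal and cross edges among the $u$-vertices force every maximal independent set containing a $u$-vertex to have size exactly $k-1$. The case analysis establishing $i(G')=k-1$ is the substantive work.

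Your corona idea would instead need a gadget realizing the monotone threshold ``at least $k$ of the $v_i$ are present'' while keeping every maximal independent set of the combined graph within a size window of two. Nothing in the corona suggests how to do this, and you supply no such gadget; pointing to~\cite{robust03} does not fill the hole, since that construction is not corona-plus-threshold and does not hand you a drop-in component. As you yourself note, this is exactly where the care is required---and it is also where the proof is. The proposal therefore has a genuine gap at its core.
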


\begin{proof}
For both results, we use a reduction from {\sc Independent Set} in general graphs, which is an NP-complete problem~\cite{MR519066}. Given a graph $G$ and an integer $k\ge 2$, we construct a graph $G'$ with $\mu_{\alpha}(G')\le 1$ as follows. For each vertex $v_i \in V(G)$, we create $k$ copies $v_{i,1}, \dots, v_{i,k}$ of $v_i$, referred to as \emph{$v$-type vertices}, in $V(G')$. We additionally create $k(k-1)$ vertices $u_{i,j}$, where $1 \le i \neq j \le k$, referred to as \emph{$u$-type vertices}. This completes the description of $V(G')$.
The first and second indices of a vertex of $G'$ are called its \emph{row number} and \emph{column number}, respectively. In other words, vertices with the same first (respectively, second) indices are said to belong to the same row (respectively, column).

We describe $E(G')$ in the sequel. There are five types of edges:
column edges, row edges, diagonal edges, cross edges, and $G$-edges.
\emph{Column edges} join all pairs of vertices (either of the same or of different types) in the same column, while \emph{row edges} join all pairs of $v$-type vertices in the same row. \emph{Diagonal edges} join all pairs of $u$-type vertices in different rows and different columns. \emph{Cross edges} join a $u$-type vertex and a $v$-type vertex if and only if the first index of the $u$-type vertex equals to the second index of the $v$-type vertex. Finally, \emph{$G$-edges} exist between $v$-type vertices if and only if the corresponding vertices are adjacent in $G$; that is, $v_{ij}v_{i'j'}\in E(G')$ if and only if $v_{i}v_{i'} \in E(G)$.

\begin{claim}\label{cl:wc}
Let $G'$ be the graph obtained from $(G,k)$ using the above transformation. Then the following statements hold.\\
a) $i(G') = k-1$.\\
b) $G'$ has independence gap at most one.\\
c) $\alpha(G)\ge k$ if and only if $\alpha(G')\ge k$.\\
d) $\alpha(G)\ge k$ if and only if $G'$ is not well-covered.
\end{claim}

\begin{sloppypar}
\begin{proof}[Proof of the claim]
$a)$ We first show that every maximal independent set $S$ in $G'$ has size at least $k-1$. Suppose that there is a maximal independent set $S$ with size at most $k-2$. Then there are at least two different columns $i$ and $j$ that do not contain any vertex from $S$. Consider the $u$-type vertex $u_{i,j}$. Since $u_{i,j} \notin S$, the maximality of $S$ and the fact that the only $v$-type neighbors of $u_{i,j}$ are in columns $i$ and $j$ imply that a $u$-type neighbor $u_{i',j'}$ of $u_{i,j}$ must be in $S$. Due to the diagonal edges and since there are no vertices in $S$ from columns $i$ and $j$, we have $i' \neq i$ and $j' \neq j$. Hence, $S$ does not contain a $v$-type vertex from column $i'$ and the only $u$-type vertices in $S$ are in row $i'$ (due to diagonal edges). Then vertex $u_{i',i}$ can be added to $S$, a contradiction. Therefore, every maximal independent set $S$ has size at least $k-1$.
This shows that $i(G')\ge k-1$.

To establish the converse inequality, we show that every maximal independent set containing a $u$-type vertex has size at most $k-1$. Let $S$ be a maximal independent set in $G'$ containing a $u$-type vertex $u_{i,j}$. Due to the column edges, $S$ cannot contain any other $u$-type vertex from column $j$. Furthermore, $S$ contains no $v$-type vertex from column $i$ because of the cross edges from $u_{i,j}$ and no $u$-type vertex from any row other than $i$ due to the diagonal edges from $u_{i,j}$. Note that there are exactly $k-1$ $u$-type vertices in each row. Hence, $S$ can contain at most $k-2$ other $u$-type vertices which are all in the same row as $u_{i,j}$ and every $u$-type vertex in $S$ other than $u_{i,j}$ excludes an additional column. Moreover, $S$ can have at most $k-2$ $v$-type vertices since columns $i$ and $j$ are excluded due to column edges. Combining these observations, we conclude that $S$ has size at most $k-1$. This shows $i(G')\le k-1$, hence equality holds.

\medskip
\noindent $b)$ At most one vertex from each column can belong to an independent set of $G'$. Therefore, $\alpha(G') \le k$ and, using also statement $a)$, we have
$\mu_\alpha(G')= \alpha(G')-i(G')\le 1$.

\medskip
\noindent $c)$ Suppose first that $G$ has an independent set of size $k$, say $S=\{v_{i_1}, \dots, v_{i_k}\}$. Then $\{v_{{i_1,1}}, v_{{i_2,2}}, \dots, v_{{i_k,k}}\}$ is an independent set of size $k$ in $G'$. Hence
$\alpha(G)\ge k$ implies that $\alpha(G')\ge k$.
Conversely, suppose that $\alpha(G)<k$, that is, $G$ has no independent set of size $k$. We already showed in part $a)$ that all maximal independent sets of $G'$ containing a $u$-type vertex have size at most $k-1$. If $S$ is a maximal independent set in $G'$ that does not contain any $u$-type vertex, then the row edges and $G$-edges ensure that $S$ corresponds to an independent set in $G$. Therefore, $S$ has size at most $k-1$ in this case as well. Hence all maximal independent sets in $G'$ have size $k-1$.

\medskip
\noindent $d)$ Since $i(G') = k-1$ by statement $a)$, graph $G'$ is not well-covered if and only $\alpha(G')\ge k$. Therefore, statement $d)$ follows from $c)$.
\end{proof}
\end{sloppypar}

Statement $c)$ of Claim~\ref{cl:wc} and the fact that {\sc Independent Set} is an NP-complete problem in general graphs imply part i) of the theorem. Furthermore, since the problem of recognizing well-covered graphs is in co-NP for general graphs, it is also in co-NP for graphs with independence gap at most $1$. Hence $ii)$ follows from statement $d)$ of Claim~\ref{cl:wc}.
\end{proof}

It should be noted that the above result does not settle the complexity status of {\sc Independent Set} for graphs having independence gap exactly 1, which we leave as an open question.

\section{A hereditary version of independence gap}\label{sec:hereditary}

We say that a graph invariant $\pi$ is \emph{monotone under induced subgraphs} (or simply: \emph{hereditary}) if $\pi(G_1)\le \pi(G_2)$ whenever $G_1$ is an induced subgraph of $G_2$. Note that independence gap is not hereditary: deleting a vertex from a graph may change the value of the independence gap in either direction.\footnote{For instance, the independence gaps of paths $P_4$, $P_3$, and $P_2$ are $0$, $1$, and $0$, respectively.} In fact, for every $k\ge 0$, the class $\mathcal{G}_k$ of graphs of independence gap at most $k$ is not hereditary. To see this, consider for example the graph $G$ which is the complete bipartite graph $K_{1,k+2}$ and let $G'$ be the graph obtained from $G$ by adding a private neighbor to every vertex of $G$. Then $G'$ is well-covered. (This can be seen, for example, by noticing that the edges incident with vertices of degree one form a tight clique partition and applying Theorem~\ref{thm:alpha-chi-bar}.) Hence $\mu_\alpha(G')\le k$, that is, $G'\in \mathcal{G}_k$. However, graph $G$, which is an induced subgraph of $G'$, has $i(G) = 1$ and $\alpha(G) = k+2$, implying that $\mu_\alpha(G) = k+1$ and hence $G\not\in \mathcal{G}_k$.

A natural way to turn the independence gap into a hereditary invariant is as follows. We denote the \emph{hereditary independence gap} of a graph $G$ by $\overline{\mu_{\alpha}}(G)$
and define it as $$\overline{\mu_{\alpha}}(G)= \max\{\mu_{\alpha}(H)\mid H\subseteq_i G\}$$ where $\subseteq_i$ denotes the induced subgraph relation. Note that a graph $G$ has hereditary independence gap at most $k$ if and only if every induced subgraph $H$ of $G$ has independence gap at most $k$. In particular, if $H$ is an induced subgraph of a graph $G$, then $\overline{\mu_\alpha}(H)\le \overline{\mu_\alpha}(G)$. This leads to a family of hereditary graph classes related to independence gap, one for each non-negative integer upper bound on the value of the hereditary independence gap.

Graphs with hereditary independence gap (at most) $0$ are easy to characterize. Note that the path $P_3$ is not well-covered, while every proper induced subgraph of $P_3$ is. Therefore, $P_3$ is a forbidden induced subgraph for the class of graphs of hereditary independence gap (at most) $0$. Furthermore, since every $P_3$-free graph is a disjoint union of complete graphs and all such graphs are well-covered, the graphs of hereditary independence gap at most $0$ are exactly the $P_3$-free graphs. In what follows, we extend this simple observation by showing that for every non-negative integer $k$, the class of graphs having hereditary independence gap at most $k$ is characterized by a finite set of forbidden induced subgraphs, all of which are bipartite.

\subsection*{\it The case $k = 1$.}

For graphs of hereditary independence gap at most one we are able to obtain a precise characterization, from which, as a side result, we will identify a new polynomially solvable case of {\sc Independent Domination}. The \emph{claw} is the complete bipartite graph $K_{1,3}$ and $2P_3$ denotes the graph consisting of two copies of the $3$-vertex path.

\begin{theorem}\label{thm:hereditary}
A graph $G$ has $\overline{\mu_{\alpha}}(G) \le 1$ if and only if $G$ is $\{$claw, $2P_3\}$-free.
\end{theorem}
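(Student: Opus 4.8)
The plan is to prove the two directions separately, noting first that the class of $\{$claw, $2P_3\}$-free graphs is hereditary, so that for the harder direction it suffices to establish a statement about $\mu_\alpha$ rather than about $\overline{\mu_\alpha}$.

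For necessity, I would simply compute the independence gaps of the two forbidden graphs. The claw $K_{1,3}$ has $i=1$ (its center forms a maximal independent set) and $\alpha=3$ (its three leaves), so $\mu_\alpha(\mathrm{claw})=2$; and $2P_3$ has $i=2$ and $\alpha=4$, both parameters being additive over disjoint unions, so $\mu_\alpha(2P_3)=2$. Since $\overline{\mu_\alpha}$ is monotone under induced subgraphs, any graph $G$ containing a claw or a $2P_3$ satisfies $\overline{\mu_\alpha}(G)\ge 2$. Hence $\overline{\mu_\alpha}(G)\le 1$ forces $G$ to be $\{$claw, $2P_3\}$-free.

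For sufficiency, because the class is hereditary it suffices to show that every $\{$claw, $2P_3\}$-free graph $G$ has $\mu_\alpha(G)\le 1$; equivalently, that any two maximal independent sets $I,J$ satisfy $||I|-|J||\le 1$. I would argue by contradiction, assuming $|I|\ge |J|+2$, and study $A=I\setminus J$, $C=J\setminus I$, together with the graph $B=G[A\cup C]$. The decisive structural observations are: (i) $A$ and $C$ are independent, so every edge of $G[A\cup C]$ runs between $A$ and $C$ and $G[A\cup C]=B$ is bipartite; (ii) maximality of $I$ and $J$ (as dominating sets) forces every vertex of $A$ and of $C$ to have at least one neighbor on the opposite side; and (iii) claw-freeness forces every vertex to have \emph{at most} two neighbors on the opposite side, since three pairwise non-adjacent neighbors together with their common neighbor would induce a claw. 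Thus $B$ has all degrees in $\{1,2\}$, so it is a disjoint union of paths and even cycles, and $|A|-|C|=|I|-|J|\ge 2$. The final step is a counting-and-extraction argument: each cycle and each path with endpoints in different parts contributes $0$ to $|A|-|C|$, a path with both endpoints in $A$ contributes $+1$, and one with both endpoints in $C$ contributes $-1$; hence at least two components are paths with both endpoints in $A$. Each such path has an odd number $\ge 3$ of vertices, so it contains an induced $P_3$ centered at a $C$-vertex, and taking two such $P_3$'s from two distinct components yields an induced $2P_3$ in $G$, the desired contradiction.

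I expect the main obstacle to be observation (i): recognizing that $G[A\cup C]$ contains no edges beyond the bipartite ones, so that distinct connected components of $B$ are genuinely separated in $G$ and two distant $P_3$'s glue into an \emph{induced} $2P_3$. Claw-freeness alone only bounds the degrees and still permits configurations with $|A|>|C|$; it is precisely the $2P_3$ obstruction that rules out the presence of two independent ``$A$-excess'' paths. The argument therefore must combine the degree bound from claw-freeness with the non-adjacency of separate components, while the parity bookkeeping guaranteeing at least two $A$-excess paths is routine once the degree structure of $B$ is established.
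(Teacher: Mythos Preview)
Your proposal is correct and follows essentially the same approach as the paper: both argue by contradiction, pass to the bipartite graph induced on the symmetric difference of two maximal independent sets whose sizes differ by at least~$2$, use claw-freeness to bound the maximum degree by~$2$ (so that this graph is a disjoint union of paths and even cycles), and then observe that the size imbalance forces at least two odd-order path components, each yielding an induced $P_3$, together giving an induced $2P_3$. Your counting of the contribution of each component type to $|A|-|C|$ is slightly more explicit than the paper's version, and your worry about observation~(i) is unfounded: since $A\subseteq I$ and $C\subseteq J$ are independent, $G[A\cup C]$ is automatically bipartite, and because it is an \emph{induced} subgraph its distinct components are non-adjacent in $G$.
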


\begin{proof}
Since the claw and the $2P_3$ have independence gap $2$, every graph $G$ with $\overline{\mu_{\alpha}}(G) \le 1$ is $\{$claw, $2P_3\}$-free.
For the other direction, it suffices to prove that for every $\{$claw, $2P_3\}$-free graph, it holds that $\mu_{\alpha}(G) \le 1$.
Suppose for a contradiction that $G$ is a $\{$claw, $2P_3\}$-free graph of independence gap at least~$2$ and let
$I_1$ and $I_2$ be two maximal independent sets in $G$ such that $|I_1|\ge |I_2|+2$.
Let $H$ be the subgraph of $G$ induced by the symmetric difference $I_1\triangle I_2$ (defined as
$(I_1\setminus I_2)\cup(I_2\setminus I_1)$). Graph $H$ is bipartite and, being claw-free, of maximum degree at most~$2$. Hence, $H$
is a disjoint union of paths and even cycles. Since $|I_1|\ge |I_2|+2$, at least two components of $H$ are
paths of even length. However, this implies that $H$, and hence $G$, contains an induced $2P_3$, a contradiction.
\end{proof}

Theorem~\ref{thm:hereditary}, together with known results from the literature on claw-free graphs, can be used to infer that {\sc Independent Domination} is solvable in polynomial time in the class of $\{$claw, $2P_3\}$-free graphs.

\begin{corollary}\label{cor:inddom}
{\sc Independent Domination} can be solved in polynomial time in the class of $\{$claw, $2P_3\}$-free graphs.
\end{corollary}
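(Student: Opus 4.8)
The plan is to reduce the computation of the independent domination number $i(G)$ to a single well-coveredness test, and then to invoke two classical polynomial-time algorithms for claw-free graphs.

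First I would exploit Theorem~\ref{thm:hereditary}: if $G$ is $\{$claw, $2P_3\}$-free, then $\overline{\mu_\alpha}(G)\le 1$, and in particular $\mu_\alpha(G)\le 1$. Since $\mu_\alpha(G)=\alpha(G)-i(G)$ and trivially $i(G)\le\alpha(G)$, this already forces $i(G)\in\{\alpha(G)-1,\alpha(G)\}$. Thus, once $\alpha(G)$ is known, computing $i(G)$ amounts to deciding which of these two values is correct. Next, I would compute $\alpha(G)$ in polynomial time using a known maximum independent set algorithm for claw-free graphs (Minty, Sbihi); note that $2P_3$-freeness is not needed here, only claw-freeness.

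The crux of the argument is the third step. Because $\mu_\alpha(G)\le 1$, we have $i(G)=\alpha(G)$ if and only if every maximal independent set of $G$ has size exactly $\alpha(G)$, that is, if and only if $G$ is well-covered. Hence it suffices to decide whether $G$ is well-covered, and this can be done in polynomial time by the known algorithm recognizing well-covered claw-free graphs (Tankus--Tarsi). If the test reports that $G$ is well-covered, I would output $i(G)=\alpha(G)$; otherwise I would output $i(G)=\alpha(G)-1$. In both cases $i(G)$ is determined exactly in polynomial time, which solves {\sc Independent Domination}.

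The work here is conceptual rather than technical: the one genuinely new ingredient is the reduction carried out in the first and third steps, which converts the search for a smallest maximal independent set into a yes/no well-coveredness question, and this reduction is precisely what the gap bound $\mu_\alpha(G)\le 1$ supplied by Theorem~\ref{thm:hereditary} makes possible. I would not expect a real obstacle beyond correctly citing the two claw-free subroutines. The only point requiring care is that the gap bound genuinely uses full $\{$claw, $2P_3\}$-freeness, whereas the two algorithmic subroutines rely on claw-freeness alone, so all hypotheses of the statement are in fact used.
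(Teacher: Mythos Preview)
Your proposal is correct and follows essentially the same approach as the paper: compute $\alpha(G)$ via Minty/Sbihi, test well-coveredness via Tankus--Tarsi, and then use the bound $\mu_\alpha(G)\le 1$ from Theorem~\ref{thm:hereditary} to conclude that $i(G)$ equals $\alpha(G)$ or $\alpha(G)-1$ according to the outcome of that test. The paper's proof is identical in substance, differing only in the order in which the ingredients are presented.
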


\begin{proof}
Given a $\{$claw, $2P_3\}$-free graph $G$, we compute the independent domination number $i(G)$ as follows.
First, we compute the independence number of $G$ using any of the polynomial-time algorithms for computing $\alpha(G)$ in claw-free graphs
(e.g., the one due to Minty~\cite{MR579076} or Sbihi~\cite{MR553650}). Then, we check whether $G$ is well-covered, using one of the polynomial-time recognition algorithms for claw-free well-covered graphs due to Tankus and Tarsi~\cite{MR1438624,MR1376052}.
If $G$ is well-covered, then $i(G) = \alpha(G)$ and we return this value.
If $G$ is not well-covered, then, by Theorem~\ref{thm:hereditary}, we have $\mu_\alpha(G) =1$, and so
$i(G) = \alpha(G)-1$ in this case.
\end{proof}

The result of Corollary~\ref{cor:inddom} is sharp with respect to both forbidden induced subgraphs. It is known that {\sc Independent Domination} is NP-complete in the class of claw-free graphs~\cite{MR1975229}. Moreover, for every $\epsilon>0$, the problem cannot be approximated to within a factor of $n^{1-\epsilon}$ in the class of $n$-vertex $2P_3$-free graphs, unless P = NP~\cite{MR2492039}. Other similar but incomparable polynomial-time solvable cases of {\sc Independent Domination} include the classes of $\{P_5, 2P_3\}$-free graphs and $\{P_5, K_{2,3}\}$-free graphs~\cite{MR3301930}, $\{\mbox{claw}, P_6\}$-free graphs \cite{MR1975229}, and $\{$claw, co-claw$\}$-free graphs where a co-claw is the complement of a claw; this latter class has bounded clique-width and admits a linear-time algorithm for computing a clique-width decomposition with a bounded number of labels~\cite{clawcoclaw}, which in turn allows us to solve the weighted version of {\sc Independent Domination} in linear time~\cite{MR1739644}.

\subsection*{\it The general case of fixed $k$.}

Let us now turn our attention to forbidden induced subgraphs for classes of graphs with hereditary independence gap at most $k$, where $k\ge 2$.
In a similar way to graphs with hereditary independence gap at most 1, it can be observed that $K_{1,k+2}$ and $(k+1)P_3$ are two forbidden induced subgraphs. However, for $k\ge 2$, they do not form the complete list; for example, the disjoint union of $K_{1,k+1}$ and $P_3$ does not contain $K_{1,k+2}$ or $(k+1)P_3$ as induced subgraphs but still has hereditary independence gap of $k+1$. Moreover, with suitable edge additions such examples can be easily extended to forbidden induced subgraphs that are connected (and bipartite). Nonetheless, the list of forbidden induced subgraphs remains finite for every $k$.

\begin{theorem}\label{thm:hereditary-k}
For every non-negative integer $k$, there is a finite set of bipartite graphs $\mathcal{F}_k$ such that a graph $G$ has $\overline{\mu_{\alpha}}(G) \le k$ if and only if it is $\mathcal{F}_k$-free.
\end{theorem}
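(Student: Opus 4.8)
The plan is to exploit that the class $\mathcal{C}$ of graphs $G$ with $\overline{\mu_{\alpha}}(G)\le k$ is hereditary, hence characterized by its (a priori possibly infinite) set of minimal forbidden induced subgraphs. The whole theorem then reduces to two claims about an arbitrary minimal forbidden induced subgraph $F$: that $F$ is bipartite, and that $|V(F)|$ is bounded by a function of $k$. I would analyze the structure of such an $F$ directly, extending the symmetric-difference idea already used for $k=1$ in the proof of Theorem~\ref{thm:hereditary}.

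First I would show $F$ is bipartite. Since every proper induced subgraph of $F$ lies in $\mathcal{C}$, the induced subgraph of $F$ achieving independence gap $\ge k+1$ must be $F$ itself, so $\mu_\alpha(F)\ge k+1$. Picking a maximum and a minimum maximal independent set $I_1,I_2$ with $|I_1|-|I_2|=\mu_\alpha(F)\ge k+1$ and passing to $H=F[I_1\triangle I_2]$, the two sides $A=I_1\setminus I_2$ and $B=I_2\setminus I_1$ are independent, so $H$ is bipartite; moreover each side is a maximal independent set of $H$, since every vertex of $B$ has, by maximality of $I_1$, a neighbor in $I_1$, which (being non-adjacent to $I_1\cap I_2\subseteq I_2$) must lie in $A$, and symmetrically. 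Thus $\mu_\alpha(H)\ge |A|-|B|=|I_1|-|I_2|\ge k+1$, so $H\notin\mathcal{C}$, and minimality of $F$ forces $H=F$. Hence every minimal forbidden induced subgraph is bipartite, with a bipartition $(A,B)$ into two maximal independent sets satisfying $|A|-|B|=d\ge k+1$.

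The core of the argument, and the step I expect to be the main obstacle, is bounding $|V(F)|=|A|+|B|$ using minimality; unlike the $k=1$ case we no longer have claw-freeness to cap the degrees of $H$, so the bound must come from vertex-deletion arguments rather than from a structural degree bound. For each $v\in B$ let $U_v=\{a\in A : N(a)\cap B=\{v\}\}$ be its set of private $A$-neighbors. Deleting $v$ keeps $A$ a maximal independent set of size $|A|$, while $(B\setminus\{v\})\cup U_v$ is a maximal independent set of $F-v$ of size $|B|-1+|U_v|$; since $F-v\in\mathcal{C}$ gives $\mu_\alpha(F-v)\le k$, this forces $|U_v|\ge d-k+1\ge 2$. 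As the sets $U_v$ are pairwise disjoint subsets of $A$, summing over $v\in B$ yields $|B|\cdot(d-k+1)\le |A|=|B|+d$, which already bounds $|B|\le k+1$. To bound $d$ (hence $|A|$), I would instead delete a private neighbor $a\in U_v$: because $|U_v|\ge 2$, the set $A\setminus\{a\}$ still dominates $B$ and remains a maximal independent set of $F-a$, so $\mu_\alpha(F-a)\ge d-1$; minimality then forces $d-1\le k$, i.e.\ $d=k+1$. Combining the two bounds gives $|A|=|B|+d\le 2k+2$ and hence $|V(F)|\le 3k+3$.

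Finally, since all minimal forbidden induced subgraphs are bipartite and have at most $3k+3$ vertices, there are only finitely many of them, and taking $\mathcal{F}_k$ to be this set proves the theorem. The delicate points to verify carefully are that $(B\setminus\{v\})\cup U_v$ and $A\setminus\{a\}$ are genuinely maximal in the respective deletions, so that they correctly bound $i(F-v)$ and witness $\alpha(F-a)$, and that the extremal instances $K_{1,k+2}$ and $(k+1)P_3$ are consistent with the size bound, serving as a sanity check on the counting.
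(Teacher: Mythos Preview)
Your proof is correct, and it takes a genuinely different route from the paper for the finiteness part, yielding a sharper conclusion.

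Both proofs establish bipartiteness via the same symmetric-difference trick: for a minimal forbidden $F$, the induced subgraph $H=F[I_1\triangle I_2]$ is bipartite with two maximal independent sets differing by at least $k+1$, so minimality forces $H=F$. The paper does this step last; you do it first, but the content is identical.

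Where the arguments diverge is in bounding $|V(F)|$. The paper argues indirectly: any $F\in\mathcal F_k\setminus\{K_{1,k+2},(k+1)P_3\}$ is both $K_{1,k+2}$-free and $(k+1)P_3$-free, hence (being bipartite) has maximum degree at most $k+1$, diameter at most $4k+1$, and at most $k$ components; a degree/diameter count then gives a bound $c_k$ that is exponential in $k$. Your argument is direct and quantitative: for each $v\in B$ you delete $v$ and compare the maximal independent sets $A$ and $(B\setminus\{v\})\cup U_v$ in $F-v$ to force $|U_v|\ge d-k+1\ge 2$; summing over the disjoint sets $U_v$ gives $|B|(d-k)\le d$, hence $|B|\le k+1$; then deleting a private neighbor $a\in U_v$ (possible since $|U_v|\ge 2$) and comparing $A\setminus\{a\}$ with $B$ in $F-a$ forces $d-1\le k$, hence $d=k+1$. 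This yields the sharp bound $|V(F)|\le 3k+3$, attained by $(k+1)P_3$.

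The maximality verifications you flag as delicate are indeed routine: $(B\setminus\{v\})\cup U_v$ is maximal in $F-v$ because any $a\in A\setminus U_v$ has, by definition of $U_v$ and maximality of $B$, a neighbour in $B\setminus\{v\}$; and $A\setminus\{a\}$ is maximal in $F-a$ because $v$ is still dominated by another vertex of $U_v$, while any $w\in B\setminus\{v\}$ was never adjacent to $a$ at all. So the argument goes through cleanly. The net effect is that your approach not only proves the theorem but strengthens it to an explicit, tight bound on the forbidden subgraphs, at the cost of a slightly more hands-on analysis than the paper's structural shortcut via $K_{1,k+2}$- and $(k+1)P_3$-freeness.
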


\begin{proof}
Fix $k\ge 0$ and let $\mathcal{F}_k$ denote the set of minimal graphs of independence gap at least $k+1$, where minimal is in the sense of induced subgraphs, that is, $\mu_\alpha(G) \ge k+1$ but $\mu_\alpha(H)\le k$ for all proper induced subgraphs $H$ of $G$. Clearly, a graph $G$ has $\overline{\mu_{\alpha}}(G) \le k$ if and only if $G$ is $\mathcal{F}_k$-free.
First, we show that $\{K_{1,k+2}, (k+1)P_3\}\subseteq \mathcal{F}_k$, where
$K_{1,k+2}$ denotes the complete bipartite graph with parts of size $1$ and $k+2$, and
$(k+1)P_3$ denotes the disjoint union of $k+1$ copies of $P_3$.
Clearly, $\alpha(K_{1,k+2}) = k+2$ and $i(K_{1,k+2}) = 1$, which implies $\mu_\alpha(K_{1,k+2}) = k+1$, and
it is easy to see that every proper induced subgraph of $K_{1,k+2}$ is of independence gap at most $k$.
Similarly, $\alpha((k+1)P_3) = 2(k+1)$ and $i((k+1)P_3) = k+1$, which implies $\mu_\alpha((k+1)P_3) = k+1$, and
again it can be readily checked that every proper induced subgraph of $(k+1)P_3$ is of independence gap at most $k$.

Let $\mathcal{F}_k'$ denote the set of all bipartite graphs in $\mathcal{ F}_k\setminus\{K_{1,k+2}, (k+1)P_3\}$.
We will now show that $\mathcal{F}_k'$ is finite. Let $F\in \mathcal{F}_k'$. If $F$ has a component $C$ that is well-covered, then $F-V(C)$ would be a proper induced subgraph of $F$ with independence gap $k+1$, contrary to the minimality of $F$. In particular, no component of $F$ is complete, which implies that every component of $F$ has an induced $P_3$. Since every proper induced subgraph of $F$ is of independence gap at most $k$ and $F\neq (k+1)P_3$, we infer that $F$ is $(k+1)P_3$-free; in particular, $F$ has at most $k$ components. Similarly, the fact that $F\neq K_{1,k+2}$ implies that $F$ is $K_{1,k+2}$-free; hence, since $F$ is bipartite, $F$ is of maximum degree at most $k+1$. Moreover, since $F$ is $(k+1)P_3$-free, its diameter is at most $4k+1$.
The fact that the maximum degree and diameter are bounded implies that there is a constant $c_k$ such that
every component of $F$ has at most $c_k$ vertices. Consequently, $F$ has at most $kc_k$ vertices and the set $\mathcal{F}_k'$ is finite.

To complete the proof, it suffices to show that
$\mathcal{F}_k = \mathcal{F}_k'\cup\{K_{1,k+2}, (k+1)P_3\}$, that is, all minimal graphs in $\mathcal{F}$ are bipartite. The inclusion $\mathcal{ F}_k'\cup\{K_{1,k+2}, (k+1)P_3\}\subseteq \mathcal{F}_k$ is clear.
Suppose that the converse inclusion is false, that is, there exists a graph $F\in \mathcal{F}_k\setminus (\mathcal{F}_k'\cup\{K_{1,k+2}, (k+1)P_3\})$.
Since $F \not\in (\mathcal{F}_k'\cup\{K_{1,k+2}, (k+1)P_3\})$, the fact that $F\in \mathcal{F}_k$ implies that
$F$ is $\mathcal{F}_k'\cup\{K_{1,k+2}, (k+1)P_3\}$-free.
Since $F$ is of independence gap at least $k+1$, there are two maximal independent sets, say $I_1$ and $I_2$, in $F$ such that
$|I_1|\ge |I_2|+k+1$. Adopting a similar approach as in the proof of Theorem~\ref{thm:hereditary}, let
$H$ be the subgraph of $F$ induced by the symmetric difference $I_1\triangle I_2$.
Graph $H$ is bipartite and $(\mathcal{F}_k'\cup\{K_{1,k+2}, (k+1)P_3\}$)-free.
Since the set $\mathcal{F}_k'\cup\{K_{1,k+2}, (k+1)P_3\}$ is exactly the set of bipartite graphs in $\mathcal{F}_k$,
we infer that $H$ is of independence gap at most $k$.
However, the fact that $I_1$ and $I_2$ are maximal independent sets in $F$ implies that every vertex in $I_1\setminus I_2$
has a neighbor in $I_2\setminus I_1$ and vice versa. Thus, $I_1\setminus I_2$ and $I_2\setminus I_1$ are maximal independent sets in $H$ differing in size by at least $k+1$, contrary to the fact that $\mu_\alpha(H)\le k$.
This completes the proof.
\end{proof}

Theorem~\ref{thm:hereditary-k} has the following algorithmic consequence.

\begin{corollary}\label{cor:hereditarily-k-quasi-well-covered}
For every non-negative integer $k$, there is a polynomial-time algorithm for recognizing graphs with hereditary independence gap at most $k$.
\end{corollary}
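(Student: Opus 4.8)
The plan is to read off the algorithm directly from Theorem~\ref{thm:hereditary-k}. That theorem tells us that $\overline{\mu_{\alpha}}(G) \le k$ holds if and only if $G$ is $\mathcal{F}_k$-free, where $\mathcal{F}_k$ is a \emph{finite} family of graphs. The basic observation is that for any fixed finite family $\mathcal{H}$, testing $\mathcal{H}$-freeness of an $n$-vertex graph $G$ is polynomial: for each $H \in \mathcal{H}$ with $|V(H)| = m$ one inspects each of the $\binom{n}{m}$ size-$m$ vertex subsets of $G$ and checks in constant time (for fixed $m$) whether it induces a copy of $H$. Since $m$ is bounded by a constant depending only on $k$, this runs in time $n^{O(1)}$.

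The one point that needs care is that the algorithm requires more than the bare finiteness of $\mathcal{F}_k$: it needs either an explicit list of the members of $\mathcal{F}_k$, or at least an explicit upper bound on their orders. I would extract this from the proof of Theorem~\ref{thm:hereditary-k}, which shows that every member of $\mathcal{F}_k$ other than $K_{1,k+2}$ and $(k+1)P_3$ has at most $k$ components, maximum degree at most $k+1$, and diameter at most $4k+1$; by the Moore bound each such component has at most $N := 1 + \sum_{i=1}^{4k+1}(k+1)k^{\,i-1}$ vertices, so these members have at most $kN$ vertices, while the two distinguished graphs have $k+3$ and $3(k+1)$ vertices. Taking $M_k$ to be the maximum of these three quantities gives an explicit bound on the order of every member of $\mathcal{F}_k$.

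With the bound $M_k$ in hand, I would formulate the algorithm so as to sidestep precomputing $\mathcal{F}_k$ entirely. Combining Theorem~\ref{thm:hereditary-k} with the bound $M_k$, a graph $G$ satisfies $\overline{\mu_{\alpha}}(G) \le k$ if and only if $\mu_\alpha(H) \le k$ for every induced subgraph $H$ of $G$ on at most $M_k$ vertices: if $\overline{\mu_{\alpha}}(G) > k$ then $G$ contains some $F \in \mathcal{F}_k$, which has at most $M_k$ vertices and satisfies $\mu_\alpha(F) \ge k+1$, while conversely any induced subgraph $H$ of $G$ with $\mu_\alpha(H) \ge k+1$ already forces $\overline{\mu_{\alpha}}(G) \ge k+1$. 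The algorithm therefore enumerates all $O(n^{M_k})$ induced subgraphs of $G$ of order at most $M_k$ and, for each, computes $\mu_\alpha(H)$ by brute-force enumeration of its maximal independent sets; it accepts if and only if all of these values are at most $k$. As $M_k$ is a constant for fixed $k$, this runs in polynomial time, and the only genuinely nontrivial ingredient is the explicit size bound supplied by Theorem~\ref{thm:hereditary-k}, which is why invoking finiteness as a black box would not quite suffice.
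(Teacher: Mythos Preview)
Your proof is correct and follows exactly the approach the paper intends: the paper states the corollary without proof as an immediate consequence of the finite forbidden-subgraph characterization in Theorem~\ref{thm:hereditary-k}, and you simply spell out the standard argument that $\mathcal{F}$-freeness is polynomially testable for any fixed finite family $\mathcal{F}$. Your extra care in extracting an explicit size bound $M_k$ from the proof of Theorem~\ref{thm:hereditary-k} is a nice touch, though strictly speaking the bare finiteness of $\mathcal{F}_k$ already guarantees the \emph{existence} of a polynomial-time algorithm (the algorithm exists even if we cannot exhibit it without knowing $\mathcal{F}_k$), so your final remark that ``invoking finiteness as a black box would not quite suffice'' slightly overstates the case.
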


Recall that we established in Theorem~\ref{thm:hardness} that {\sc Independent Set} is
co-NP-complete in the class of graphs of (non-hereditary) independence gap at most 1. In contrast with this fact, another consequence of Theorem \ref{thm:hereditary-k}, when combined with known results in the literature, is a polynomial-time algorithm for the weighted version of the {\sc Independent Set} problem in the class of graphs with hereditary independence gap at most $k$.

\begin{corollary}\label{cor:WIS}
For every non-negative integer $k$, {\sc Weighted Independent Set}
is polynomial-time solvable in the class of graphs of hereditary
independence gap at most $k$.
\end{corollary}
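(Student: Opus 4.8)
The plan is to combine the structural characterization of Theorem~\ref{thm:hereditary-k} with a known polynomial-time algorithm for {\sc Weighted Independent Set} in classes of graphs that simultaneously exclude a long induced path and a large induced star, due to Lozin and Rautenbach~\cite{WIS}. The first step is to record the two explicit members of the forbidden family identified at the start of the proof of Theorem~\ref{thm:hereditary-k}: since $K_{1,k+2}\in\mathcal{F}_k$ and $(k+1)P_3\in\mathcal{F}_k$, every graph $G$ of hereditary independence gap at most $k$ is in particular $\{K_{1,k+2},(k+1)P_3\}$-free. Note that only these two members are needed here, not the full finiteness of $\mathcal{F}_k$.

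The second step is to translate $(k+1)P_3$-freeness into the absence of a long induced path. The elementary observation is that $P_{4k+3}$ contains an induced copy of $(k+1)P_3$: taking the vertex triples $\{4i-3,4i-2,4i-1\}$ for $i=1,\dots,k+1$ yields $k+1$ induced $P_3$'s, and the single skipped vertex between consecutive triples guarantees that there are no edges between them, so the union is an induced $(k+1)P_3$. Hence every $(k+1)P_3$-free graph is $P_{4k+3}$-free, and the class of graphs of hereditary independence gap at most $k$ is contained in the class of $(P_{4k+3},K_{1,k+2})$-free graphs.

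The final step is to invoke the theorem of Lozin and Rautenbach, which I expect to state that for all fixed $t$ and $\ell$, {\sc Weighted Independent Set} is solvable in polynomial time in the class of $(P_t,K_{1,\ell})$-free graphs. Applying it with $t=4k+3$ and $\ell=k+2$, both constants once $k$ is fixed, yields a polynomial-time algorithm that is correct on every graph of hereditary independence gap at most $k$; since the input is guaranteed to lie in this class, the algorithm may simply be run on it, with no separate membership test required.

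I expect the main obstacle to be matching the hypotheses of the external result precisely: one must confirm that the cited algorithm handles the two excluded subgraphs in exactly the combined form ``no long induced path and no large induced star'' ($(P_t,K_{1,\ell})$-freeness), rather than under a hypothesis such as bounded vertex degree, which fails here because the class contains arbitrarily large cliques (for instance $K_n$ is $P_3$-free and hence of hereditary independence gap $0$). The bounded-star condition $K_{1,k+2}$-freeness is precisely the weakening of bounded degree that survives in our setting, so the essential verification is that $K_{1,\ell}$-freeness, and not a degree bound, is the hypothesis actually used by~\cite{WIS}. The remaining ingredients -- the reduction to Theorem~\ref{thm:hereditary-k} and the derivation of $P_{4k+3}$-freeness -- are routine.
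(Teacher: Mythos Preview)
Your proposal is correct and follows essentially the same route as the paper's own proof: extract from Theorem~\ref{thm:hereditary-k} that every graph of hereditary independence gap at most $k$ is $\{K_{1,k+2},(k+1)P_3\}$-free, deduce $P_{4k+3}$-freeness, and apply the Lozin--Rautenbach result for $\{P_\ell,K_{1,\ell'}\}$-free graphs. Your write-up is in fact slightly more detailed than the paper's, which simply asserts the $P_{4k+3}$-freeness step without spelling out the embedding of $(k+1)P_3$ in $P_{4k+3}$.
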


\begin{proof}
Let $G$ be a graph with $\overline{\mu_{\alpha}}(G)\le k$. By the proof of Theorem \ref{thm:hereditary-k}, $G$ is $\{K_{1,k+2}, (k+1)P_3\}$-free; thus, it is also $P_{4k+3}$-free. We conclude by applying a result of
Lozin and Rautenbach~\cite{WIS} stating that for every two positive integers $\ell$ and $\ell'$, {\sc Weighted Independent Set} is polynomial-time solvable in the class of $\{P_{\ell}, K_{1,\ell'}\}$-free graphs.
\end{proof}

\subsection*{\it The case when $k$ is part of input.}

We conclude this section by observing that, in contrast to Corollary \ref{cor:hereditarily-k-quasi-well-covered}, a simple reduction from  {\sc Independent Set} shows that the problem of computing the hereditary independence gap is NP-hard.

\begin{theorem}\label{thm-np-hardness}
Given a graph $G$ and an integer $k$, it is co-NP-complete to determine whether $\overline{\mu_{\alpha}}(G) \le k$.
\end{theorem}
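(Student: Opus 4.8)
The plan is to establish the two directions separately: membership in co-NP, and co-NP-hardness via a reduction from {\sc Independent Set}. For membership, I would observe that a succinct certificate for a \emph{no}-instance, i.e.\ for $\overline{\mu_{\alpha}}(G) > k$, consists of a vertex subset $W\subseteq V(G)$ together with two subsets $I_1,I_2\subseteq W$. One checks in polynomial time that $I_1$ and $I_2$ are maximal independent sets of the induced subgraph $G[W]$ and that $\bigl||I_1|-|I_2|\bigr|\ge k+1$. Since $\overline{\mu_{\alpha}}(G) > k$ holds precisely when such $W,I_1,I_2$ exist, the complementary problem lies in NP and hence deciding $\overline{\mu_{\alpha}}(G)\le k$ is in co-NP.

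For hardness I would reduce from {\sc Independent Set}, which is NP-complete. Given an instance $(G,t)$ with $t\ge 2$, construct $G'$ by adding to $G$ a single new vertex $w$ adjacent to every vertex of $G$ (so $w$ is universal in $G'$), and set $k=t-2$. The heart of the argument is the identity $\overline{\mu_{\alpha}}(G')=\alpha(G)-1$. The upper bound here is in fact completely general and requires no gadget: for every nonempty graph $H$ we have $i(H)\ge 1$, hence $\mu_{\alpha}(H)=\alpha(H)-i(H)\le \alpha(H)-1$. Applying this to every induced subgraph $H$ of $G'$ and using $\alpha(H)\le \alpha(G')=\alpha(G)$ (the last equality because $w$ is adjacent to all of $V(G)$, so a maximum independent set either is $\{w\}$ or avoids $w$), I obtain $\overline{\mu_{\alpha}}(G')\le \alpha(G)-1$. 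For the matching lower bound, note that $\{w\}$ is a maximal independent set of $G'$, so $i(G')=1$, while $\alpha(G')=\alpha(G)$; hence $\overline{\mu_{\alpha}}(G')\ge \mu_{\alpha}(G')=\alpha(G)-1$.

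Combining the two bounds yields $\overline{\mu_{\alpha}}(G')=\alpha(G)-1$, so $\overline{\mu_{\alpha}}(G')\le k=t-2$ holds if and only if $\alpha(G)\le t-1$, i.e.\ if and only if $G$ has no independent set of size $t$. Thus the map $(G,t)\mapsto (G',\,t-2)$ sends \emph{yes}-instances of {\sc Independent Set} exactly to \emph{no}-instances of our problem, which, together with the co-NP membership above, gives co-NP-completeness.

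I do not expect a serious obstacle: the reduction is a single universal-vertex gadget, and the only genuine content is pinning the hereditary gap of $G'$ to the exact value $\alpha(G)-1$. The one place meriting care is the general upper bound $\overline{\mu_{\alpha}}(G')\le \alpha(G')-1$, which crucially relies on the trivial-looking but essential fact $i(H)\ge 1$ for nonempty induced subgraphs $H$; beyond that, the main thing to guard against is degenerate inputs, which I would dispatch by restricting the reduction to $t\ge 2$ and nonempty $G$ (so that $\alpha(G)\ge 1$ and $\mu_{\alpha}(G')=\alpha(G)-1$ is well defined, and empty induced subgraphs, having gap $0$, cause no trouble).
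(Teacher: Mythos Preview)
Your proof is correct and follows essentially the same approach as the paper: membership in co-NP via a certificate $(W,I_1,I_2)$, and hardness via the universal-vertex gadget reducing from \textsc{Independent Set}. The only cosmetic difference is that you pin down the exact value $\overline{\mu_{\alpha}}(G')=\alpha(G)-1$ using the general bound $\mu_{\alpha}(H)\le \alpha(H)-1$, whereas the paper argues the equivalence $\alpha(G)\ge t \Leftrightarrow \overline{\mu_{\alpha}}(G')\ge t-1$ directly by extracting a large independent set from the witnessing subgraph; both routes are equally short.
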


\begin{proof}
If $(G,k)$ is a no instance to the problem of determining whether $G$ has $\overline{\mu_{\alpha}}(G) \le k$,
then there exists an induced subgraph $H$ of $G$ having two maximal independent sets $I_1$ and $I_2$ such that
$|I_1|\ge |I_2|+k+1$. Therefore, the triple $(H,I_1,I_2)$ forms a polynomially verifiable certificate of the fact that $(G,k)$ is a no instance. This shows that the problem is in co-NP.

We prove hardness by a reduction from the NP-hard {\sc Independent Set} problem. Let $(G,k)$ be an instance to the independent set problem; we may assume that $k\ge 2$.
Let $G'$ be the graph obtained from $G$ by adding to it a new vertex $v$ and making it adjacent to all vertices of $G$.
To complete the proof, we claim that $\alpha(G)\ge k$ if and only if $G'$ has $\overline{\mu_{\alpha}}(G) \ge k-1$.
Suppose first that $\alpha(G)\ge k$ and let $I$ be an independent set in $G$ of size $k$.
Then, the subgraph of $G'$ induced by $I\cup \{v\}$ is isomorphic to the complete bipartite graph $K_{1,k}$ and hence of independence gap $k-1$.
This implies that $G'$ has $\overline{\mu_{\alpha}}(G) \ge k-1$.
Conversely, suppose that $G'$ has $\overline{\mu_{\alpha}}(G) \ge k-1$.
Then $G'$ has an induced subgraph $H$ such that $\mu_\alpha(H)\ge k-1$.
Let $I_1$ and $I_2$ be two maximal independent sets in $H$ such that $|I_1|\ge |I_2|+k-1$.
Since $|I_1|\ge k\ge 2$ and $v$ is adjacent to all vertices of $G$, we have $v\not\in I_1$.
Thus, $I_1$ is an independent set in $G$ of size at least $k$.
\end{proof}

\section{Summary and open questions}\label{sec:sum}

In Table \ref{tab:sum}, we summarize some known results about independence gap and hereditary independence gap for various graph classes. All results with a reference are known from the literature, whereas results obtained in this paper are shown in bold with a reference to the related statement therein. All results without a reference follow from one of the referenced results as a consequence of the containment of one graph class in another one (if a result is implied by another result in the same column), or as a consequence of the fact that the problem under consideration is at least as difficult as another problem (if a result is implied by another result in the same row). Moreover, if both {\sc Independent Domination} (column 4) and {\sc Independent Set} (column 5) can be solved in polynomial time, then one can also compute the independence gap of the given graph in polynomial time.
Cells containing a ``?'' sign designate problems whose complexity in the corresponding graph class is unknown. In particular, we leave a more detailed investigation of algorithmic aspects of hereditary independence gap
as an interesting problem for future work.

\begin{table}
\begin{center}
\footnotesize
  \begin{tabular}{|c|c|c|c|c|c|c|c|}
\hline   & $\mu_{\alpha} \le k$ & $\mu_{\alpha} \le k$ & $\mu_{\alpha} = 0$ & $i \le k$ & $\alpha \ge k$ & $\overline{\mu_{\alpha}}\le k$ &  $\overline{\mu_{\alpha}}\le k$\\
     & $k$ input & $k$ const. & (well-covered) & $k$ input & $k$ input & $k$ input
      & $k$ const. \\
          \hline
    Perfect, & & & & & & &\\
    $\omega$ const. & co-NP-c & \bf{P (\ref{cor:perfect})} & P~\cite{MR1264476} & NP-c & P \cite{grotschel1981ellipsoid} & ? & P\\ \hline
    Bipartite & co-NP-c & P & P~\cite{MR0469831} & NP-c \cite{zvervich1995induced} & P & ? & P\\ \hline
    Chordal & P & P & P~\cite{MR1368737} & P \cite{MR687354}& P \cite{gavril1972algorithms}& ? & P\\ \hline
   Weakly Chordal & co-NP-c & co-NP-c & co-NP-c~\cite{MR1161178,MR1217991} & NP-c~\cite{MR2207507} & P & ? & P\\ \hline
    $\{\mbox{claw},2P_3\}$-free & P & P & P & \bf{P (\ref{cor:inddom})} & P ~\cite{MR579076, MR553650} & P~(see~(\ref{thm:hereditary})) & P \\ \hline
    All & co-NP-c & {co-NP-c}~\cite{MR1254158} & co-NP-c & NP-c & NP-c  & \bf{co-NP-c} & \bf{P}\\
     & & & & & & \bf{(\ref{thm-np-hardness})} &  \bf{(\ref{cor:hereditarily-k-quasi-well-covered})}\\ \hline
  \end{tabular}
\end{center}
\caption{Summary of complexity results.}\label{tab:sum}
\end{table}

We also obtained some new complexity results for {\sc Independent Set}. The problem is NP-complete in the class of graphs with independence gap at most $1$, whereas its weighted version can be solved in polynomial time for graphs of bounded hereditary independence gap. Furthermore, we have also showed that the complexity of recognizing well-covered graphs is co-NP-complete even when the graph is restricted to have independence gap at most one. As noted at the end of Section \ref{sec:indgap}, the complexity of {\sc Independent Set} for graphs having independence gap exactly~1 does not follow from the results in this paper; we leave this as an open question. Finally, our work leaves open the complexity status of recognizing (non-perfect) graphs of bounded clique number and bounded independence gap.

\subsection*{Acknowledgements}

This research has been carried out when T\i naz Ekim was spending her academic leave at the University of Oregon, Department of Computer and Information Science supported  by Fulbright Scholarship and The Scientific and Technological Research Council of Turkey TÜB\.ITAK 2219 Grant. She is also supported by the Turkish Academy of Sciences GEBIP award.
This work is supported in part by the Slovenian Research Agency (I0-0035, research program P1-0285 and research projects N1-0032, N1-0038, J1-7051, N1-0062, and J1-9110).

\end{document}